\documentclass{amsart}
\usepackage{amsthm}
\usepackage{geometry}
\usepackage{amsmath,amssymb}
\usepackage{graphicx}
\newtheorem{theorem}{Theorem}[section]
\numberwithin{equation}{section}
\newtheorem{lemma}[theorem]{Lemma}
\newtheorem{proposition}[theorem]{Proposition}

\newtheorem{remark}[theorem]{Remark}

\newtheorem{notation}[theorem]{Notation}

\newcommand{\N}{{\mathbb N}}
\newcommand{\R}{{\mathbb R}}

\newcommand{\Z}{{\mathbb Z}}

\parindent=0cm

\begin{document}

\title{Goldbach versus de Polignac numbers}

\author{Jacques Benatar}
\address{School of Mathematics, Tel Aviv University, Tel Aviv, Israel.}
\email{benatar@mail.tau.ac.il}

\begin{abstract} In this note we use recent developments in sieve theory to highlight the interplay between Goldbach and de Polignac numbers. Assuming that the primes have level of distribution greater than $1/2$, we show that at least one of two nice properties holds. Either consecutive Goldbach numbers lie within a finite distance from one another or else the set of de Polignac numbers has full density in $2 \N$. Using very similar techniques we give a conditional proof that the set of limit points of the sequence of normalised prime gaps $(p_{n+1}-p_n)/ \log p_n$ has  density at least $2/3$ in the positive reals. 
\end{abstract}

 \maketitle 
\section{Introduction}
Let $\mathcal{P}$ denote the set of prime numbers and write $p_n$ for its $n$-th member.  Given an admissible tuple of integers $\mathcal{H}=\left\{h_1,...,h_k\right\}$, the Hardy-Littlewood $k$-tuple prime conjecture is the assertion that 
$$\left\{n+h_1,...,n+h_k \right\} \subset \mathcal{P}$$
for infinitely many integers $n$. The problem has seen a number of breakthroughs over the past two decades and these efforts spawned an international collaboration known as the the Polymath8 project \cite{Poly}. Assuming the generalised Elliott-Halberstam conjecture, it was demonstrated that any admissible configuration $\left\{n+h_1,n+h_2,n+h_3 \right\}$ contains at least two primes for infinitely many values of $n$. These ideas can be applied to Goldbach numbers, that is to say, positive integers which are expressible as the sum of two primes. Fixing some large natural number $N$ one considers the collection $\left\{n,n+2,N-n \right\}$ and in this manner it can be shown, under suitable hypotheses, that at least one of the following statements must hold \cite[Theorem 1.5]{Poly}:
\begin{itemize}
\item[(i)] There are infinitely many twin primes.
\item[(ii)] One has $g_{n+1}-g_n \leq 4$ for all sufficiently large $n$.
\end{itemize} 
Here $g_n$ denotes the $n$-th Goldbach number. In this paper we prove a result of the same nature. To state the theorem, we recall that $m \in \N$ is said to be a de Polignac number if there exist infinitely many pairs of primes $(p,p')$ such that $p-p'=m$. Let $\mathcal{D}$ denote the set of de Polignac numbers.
\begin{theorem}\label{GvsP}
Assume that the primes have level of distribution $\theta >1/2$, which is to say EH$[\theta]$ holds for some $\theta \in(1/2,1)$ (see section \ref{sievesection} below). Then at least one of the following statements must hold:
\begin{itemize}
\item[(i)] There exists an absolute constant $C>0$ so that $g_{n+1}-g_n \leq C$ for all sufficiently large $n$.
\item[(ii)] The set $\mathcal{D}$ has full asymptotic density in the even numbers and more precisely
\begin{align}\label{Ddensity}
\left| \mathcal{D}^c \cap [0,N]\right| \leq N^{\kappa}
\end{align}
for all large $N$ and some $\kappa <1$ depending only on $\theta$. 
\end{itemize}
\end{theorem}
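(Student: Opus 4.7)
The plan is to argue the contrapositive: assume (i) fails, so that along some sequence $N_j, L_j \to \infty$ the intervals $[N_j, N_j + L_j]$ contain no Goldbach number, and then deduce (ii). The workhorse will be the Maynard--Tao sieve together with the second-moment estimate announced in the abstract.

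First fix a large admissible $k$-tuple $\mathcal{H} = \{h_1, \ldots, h_k\} \subset [0, H]$ chosen so that every even integer in $(0, H]$ is realised many times as a difference $h_i - h_j$ (for example by taking $\mathcal{H}$ to be a dense subset of the primes in $[0, H]$). With the Maynard--Tao weights $w_n$ supported on $n \in [X, 2X]$ and $S_1 := \sum_n w_n$, the standard first-moment inequality $\sum_n w_n \sum_i \mathbf{1}_{n + h_i \in \mathcal{P}} \geq c (\log k) S_1$ combined with Cauchy--Schwarz gives a lower bound
\[
M_2 := \sum_n w_n \Bigl( \sum_{i=1}^k \mathbf{1}_{n + h_i \in \mathcal{P}} \Bigr)^2 \;\geq\; c' (\log k)^2 S_1,
\]
while the second-moment estimate of the paper supplies a matching upper bound of the same order. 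Expanding as $M_2 = \sum_d T_d$, where $T_d := \sum_{i,j:\,h_i - h_j = d} \sum_n w_n \mathbf{1}_{n+h_i \in \mathcal{P}} \mathbf{1}_{n+h_j \in \mathcal{P}}$, sets up a pair-correlation analysis.

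Now I bring in the Goldbach gap. Whenever $n+h_i$ and $n+h_j$ are both prime, the sum $2n + h_i + h_j$ is a Goldbach number and so cannot lie in $[N_j, N_j + L_j]$. Choosing $X \asymp N_j / 2$ so that the sieve window straddles $N_j / 2$ excludes, for each pair $(i,j)$, a subinterval of $[X, 2X]$ of length $\asymp L_j$ from contributing to $T_d$ (the shifts $h_i + h_j$ all lying in $[0, 2H]$). On the other hand, if $d \notin \mathcal{D}$ then $T_d$ is uniformly bounded, since only finitely many $n$ can make $n+h_i$ and $n+h_j$ both prime. Pitting the lower bound on $M_2$ against the mass lost to Goldbach suppression and to the non-de-Polignac pairs yields a lower bound for the contribution of $d \in \mathcal{D} \cap [0, H]$, from which $|\mathcal{D}^c \cap [0, H]| \ll H^\kappa$ with $\kappa < 1$ follows upon rearrangement. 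Letting $H = H(N)$ grow and summing dyadically then transports the local bound to the global estimate \eqref{Ddensity}.

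The principal obstacle is the final conversion: converting the matching upper and lower bounds on $M_2$ into individual control of the $T_d$ that is sharp enough to produce a polynomial, rather than merely logarithmic, saving for $|\mathcal{D}^c \cap [0, H]|$. This will force a delicate calibration of $k, H$, and $X$ against $N_j$ and $L_j$, and it is here that the second-moment estimate proved earlier in the paper is expected to do most of the work.
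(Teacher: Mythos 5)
Your outline misses the device that makes the Goldbach half of the argument work. The paper pairs the admissible tuple $\mathcal{H}$ with its \emph{mirror} $\mathcal{H}' = \{N - h_j : h_j \in \mathcal{H}\}$ and uses the indicator $a_h(n) = 1_{\mathcal{P}}(h-n)$ for the mirrored shifts. With that set-up a successful cross-pair, namely $n + h_i$ prime and $(N - h_j) - n$ prime, produces the Goldbach number $N + h_i - h_j$, which lies in the \emph{bounded} window $[N-H, N+H]$ regardless of $n$. Your proposal instead tracks $2n + h_i + h_j$; as $n$ sweeps the sieve window $[X,2X]$ with $X \asymp N_j/2$, these Goldbach numbers disperse over an interval of length comparable to $X$, so a Goldbach-free strip $[N_j, N_j + L_j]$ only deletes $n$ from a subinterval of length about $L_j$. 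Since the failure of (i) gives $L_j \to \infty$ but certainly not $L_j \gg N_j$ (almost all even integers are Goldbach numbers, so Goldbach-free gaps have length $o(N_j)$), the mass you propose to remove from the pair-correlation sums $T_d$ is a vanishing fraction of the total and no usable inequality results. The mirroring is the non-negotiable step that pins the Goldbach numbers down to a fixed neighbourhood of $N$.

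Second, the power saving $|\mathcal{D}^c \cap [N]| \ll N^\kappa$ does not come out of the moment comparison at all, as you yourself suspect. The paper instead runs a pigeonhole dichotomy on the \emph{fraction} of de Polignac differences within admissible tuples: either, for some unbounded sequence of $k$, every admissible $k$-tuple has at least $(1/2-\epsilon)\binom{k}{2}$ non-de-Polignac differences, in which case the mirrored Cauchy--Schwarz argument together with Proposition 2.2 forces the cross-pair correlations to be positive and gives (i); or else every large admissible $k$-tuple has a strict majority of de Polignac differences, yielding the \emph{cross product property} (any two disjoint $k$-sets of evens with admissible union share a de Polignac cross-difference). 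The density statement in (ii) is then extracted from that purely combinatorial hypothesis via the K\H{o}v\'ari--S\'os--Tur\'an theorem and a ``surfing'' lemma that carves admissible $k$-subsets out of any two large disjoint sets of evens, showing the graph of non-de-Polignac pairs on $2[N]$ is $K_{\ell,\ell}$-free and hence has $O(N^{2-1/\ell})$ edges. The sieve estimate's only job is to establish the dichotomy; the polynomial saving comes entirely from the extremal graph theory, which does not appear in your plan.
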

This result may be compared with Pintz' \cite[Theorem 1.4]{Pintz0} as well as \cite[Theorem 2.6]{Pintz} where, among other interesting facts, it is shown \textsl{unconditionally} that the set $\mathcal{D}$ has positive lower density in $2\N$. We also note that our statement holds for any level of distribution $\theta>1/2$ while the Polymath theorem uses the full strength of the Elliott-Halberstam Conjecture. The proof of Theorem \ref{GvsP} relies on an auto-correlation estimate for Maynard's sieve weight (see \cite{May}); such auto-correlation results have already been applied to the study of normalised prime gaps, that is to say, the sequence $s_n=(p_{n+1}-p_n)/ \log p_n$ where $p_n$ denotes the $n$-th prime. We will take this opportunity to discuss a conditional density estimate for the set $\mathcal{L}$, consisting of all limit points of $(s_n)_{n \in \N}$. Based on Cramer's probabilistic model of the primes, it is plausible that the sequence $s_n$ is distributed according to a Poisson process, i.e., for each $0< \alpha < \beta$ one expects that \cite{Gall}, \cite{Sound}, 
\begin{equation}\label{poisson}
\frac{1}{N} |\left\{ n \leq N: s_n \in (\alpha, \beta) \right\} | \sim \int_{\alpha}^{\beta} e^{-t} \ dt.
\end{equation}
Unable to prove this strong conjecture, several authors (e.g. \cite{BakFrei}, \cite{GL}, \cite{HM},\cite{Pintz2}) have turned their attention to various large scale and extremal questions regarding the set $\mathcal{L}$. For example, writing $m$ for Lebesgue measure, one might ask to bound the asymptotic lower density 
$$\underline{d}(\mathcal{L})=\liminf_{T \rightarrow \infty} \frac {m([0,T] \cap \mathcal{L}) }{T}$$ 
from below. Recent contributions to this question can be found in the breakthrough paper of Banks, Freiberg and Maynard \cite{Banks} and, subsequently, in \cite{Pintz2}, \cite{BakFrei} and \cite{Mer}. Assuming a special form of the Elliott-Halberstam conjecture (see section \ref{normalisedsection} below), we will give a conditional improvement of the density results obtained in the aforementioned works.
\begin{proposition}\label{Ldensity}
Assuming the conjecture $EH^{*}[\theta]$ holds for some $\theta \in(2/3,1)$, the limit set $\mathcal{L}$ obeys the (ineffective) estimate
\begin{equation}\label{lowerdensityL}
\underline{d}(\mathcal{L}) \geq 2/3.
\end{equation}
\end{proposition}
The proof of Proposition \ref{Ldensity} relies on a combinatorial result concerning sum-free subsets of $\R^{+}=[0,\infty)$; we were unable to find this fact in the literature\footnote{Proposition \ref{sumfree} was proven independently by J. Merikoski in unpublished work.}. Recall that the set $A \subset \R^{+}$ is called sum-free if the equation $x+y=z$ has no solutions in $A^3$. We also define the asymptotic upper density of the set $A$ to be  $\overline{d}(A)=\limsup_{T \rightarrow \infty} (m([0,T] \cap \mathcal{L}) /T$.
\begin{proposition}\label{sumfree}
For any Lebesgue measurable sum-free set $E \subset \R^{+}$, one has that $\overline{d}(E) \leq 1/3$. 
\end{proposition} 

\begin{remark} Proposition \ref{sumfree} is optimal in the sense that the set 
$$3\N+(1,2)=\bigcup_{k \geq 0} (3k+1,3k+2)$$ 
is sum-free with (upper) density $1/3$.
\end{remark}
\begin{notation}
We give some standard notation that will be used throughout the paper. For functions $f$ and $g$ we will use the symbols $f\ll g$ and $f=O(g)$ interchangeably to express Landau's big O symbol. A subscript of the form $\ll_{\eta}$ means the implied constant may depend on the quantity $\eta$. 
The statement $f \sim g$ means $f$ and $g$ are asymptotically equivalent, i.e., $\lim_{x \rightarrow \infty} f(x)/g(x)=1$. 
Given a natural number $m$, we write $P^{+}(m)$ for its largest prime divisor. A sum with superscript $\sum^{\flat}$ indicates a restriction to squarefree variables.
We reserve the letter $\mu$ for the M\"obius function and write $[N]=\left\{1,2,...,N\right\}$ for any natural number $N$. The letter $p$ always refers to a prime number and $\log_r n$ denotes the $r$-fold iterated logarithm of $n$. Finally, we write $(d,e)=\gcd(d,e)$ and let $[s_1,,...,s_r]=lcm(s_1,...,s_r)$ denote the least common multiple of the positive integers $s_1,...,s_r$.  
\end{notation}
{\bf Special Notation} To simplify notation we will write $r(N,k)=o_k(1)$ when $\lim_{k \rightarrow \infty} r(N,k)=0$, independently of $N$.

\section{Setting up the sieve}\label{sievesection}
\subsection{Preliminaries}
Let $N>0$ be a large, growing parameter and set 
\begin{equation}\label{WQdef}
W=\prod_{p \leq w} p, \qquad w:= \log_3 N.
\end{equation}
Throughout this paper, the natural number $k$ may be viewed as arbitrarily large but fixed or, alternatively, growing very slowly with $N$. It will be convenient to choose a $\psi: \N \rightarrow (0,\infty)$ tending to zero in such a way that $\psi(k) \log k \rightarrow \infty$. We write 
\[ \tilde{\psi}(k)=\psi(k) \log k. \] 
In order to obtain clusters of primes in bounded intervals we first select an admissible $k$-tuple of distinct integer shifts 
$\mathcal{H}=\left\{ h_1,...,h_k \right\}$, with each shift $h_j$ satisfying $\mathcal{P}(2k+1) | h_j$. 
Recall that $\mathcal{H}$ is said to be admissible if, for each prime $p$, the set $\mathcal{H}$ does not occupy each residue class modulo $p$. 
Let us assume that each $h \in \mathcal{H}$ satisfies the bound $|h|\leq w$ and extend $\mathcal{H}$ to an admiissible $2k$-tuple 
$$\overline{\mathcal{H}}=\left\{ h_1,...,h_{2k} \right\}$$ 
by choosing $N$ to be a multiple of $\mathcal{P}(2k+1)$ and introducing the \textsl{dual} shift $h_j=h_{j-k}-2N$ for each $j=k+1,...,2k$. 
Given any prime $p>w$, it is not hard to see that $p|(h_{\ell_1}-h_{\ell_2})$ for at most one pair $1 \leq \ell_1 < \ell_2 \leq 2k$. 
As a result we get the decomposition 
$$\left\{ p \in \mathcal{P}:p>w \right\}= P_{0} \cup \dot{\bigcup}_{1 \leq \ell_1 < \ell_2 \leq 2k} P_{\ell_1, \ell_2 },$$
where, for each pair $1 \leq \ell_1 < \ell_2 \leq 2k$,  
\begin{equation}\label{Pl1l2}
P_{\ell_1, \ell_2 }=P_{\ell_1, \ell_2 }(\overline{\mathcal{H} })=\left\{ p>w:p|(h_{\ell_1}-h_{\ell_2}) \right\}
\end{equation}
and $P_0$ consists of all remaining primes $p > w$ which do not divide any of the differences $h_{\ell_1}-h_{\ell_2}$. Setting
\begin{equation}\label{Qdef}
Q_{\ell_1,\ell_2}= \prod_{p \in P_{\ell_1, \ell_2 }}  p,
\qquad  \qquad  Q=\prod_{\substack{(\ell_1,\ell_2) \\ 1 \leq \ell_1 < \ell_2 \leq 2k}} \prod_{p \in P_{\ell_1, \ell_2 }}  p,
\end{equation}
for each $1 \leq \ell_1 < \ell_2 \leq 2k$, we may now define the singular series
\begin{equation}\label{singular}
\mathfrak{S}=\mathfrak{S}(W, \overline{ \mathcal{H} })=\prod_{p |Q } \left( 1-\frac{1}{p} \right)^{-1}.
\end{equation}
Finally, we choose a residue class $b \bmod W$ such that $(b+h_j,W)=1$ for each $j \leq k$ and set
$$a_h(n)= 1_{\mathcal{P}}( |n+h|), \qquad h \in \overline{\mathcal{H}}.$$ 
The key arguments in Sections \ref{GvsPsection} and \ref{normalisedsection} make use of the following observation: if we are able to produce a suitable weighted sum
\begin{align}\label{satz}
\sum_{n \leq N} \left(\sum_{i=1}^k a_{h_i}(n) -  (m-1) \right) w(n)^2 >0
\end{align}
then we may deduce the existence of an $m$-tuple $(n+h_{i_1},..., n+h_{i_m})$ consisting entirely of primes.\\ 
We will employ weight functions $w(n)$ of the shape
\begin{equation}\label{weight}
w_f(n)=\sum^{\star}_{ d_1,...,d_{2k}  }   \lambda_{\underline{d}}, 
\end{equation}
where the starred sum ranges over $2k$-tuples $\underline{d}=(d_1,...,d_{2k})$ such that $d_i |n+h_i$ and $(d_i,W)=1 $ for each index $i \leq 2k$. We set
\begin{align}\label{lambda}
\lambda_{\underline{d} }=\left(\prod_{i=1}^{2k} \mu(d_i) \right) f \left( \frac{ \log d_1}{\log R},...,\frac{ \log d_{2k}}{\log R}  \right), \qquad  R=N^{\delta}
\end{align}
for some smooth function $f:\left. \left[0, \infty \right)^{2k} \rightarrow \R \right.$ supported\footnote{Here we mean the restriction of any smooth $f:\R^{2k} \rightarrow \R $ such that $f(\underline{t})=0$ whenever $\underline{t} \in \left.\left[0, \infty \right) \right.^{2k}$ and $\sum_{i} t_i \geq 1$.} on the simplex $\Delta_{2k}=\Delta_{2k}(0,1)$ where, in general, we define 
$$\Delta_r(\eta, \tau) = \left\{t_1,...,t_r \geq \eta \left| \ \sum_{i=1}^r  t_i \leq \tau \right. \right\}$$
 for any pair of real numbers $0\leq  \eta< \tau  \leq 1$. The truncation parameter $R=N^{\delta}$, depends on the level of 
distribution of the primes. We will assume that  $\delta=1/4 + \epsilon_0$ for some arbitrarily small but fixed $\epsilon_0>0$. 
Finally, for each $l \leq 2k$ and each pair $1 \leq i < j \leq 2k$, we write
\begin{align}\label{Ddef}
Df&=\frac{\partial^{2k} f}{\partial t_1 ...\partial t_{2k} },  \qquad 
D_lf=\frac{\partial^{2k-1} f}{\partial t_1 ...\partial t_{l-1}\partial t_{l+1}...\partial t_{2k} } \notag \\
& \qquad D_{i,j}=\frac{\partial^{2k-2} f}{ \partial t_1 ...\partial t_{i-1}\partial t_{i+1}...\partial t_{j-1}\partial t_{j+1}...\partial t_{2k} }.
\end{align}

{\bf The partial Elliott-Halberstam conjecture EH[$\theta$].} In order to prove Theorem \ref{GvsP}, we will need to assume that the primes have level of distribution greater than $1/2$. To be precise, given any primitive residue class $a \bmod q$, define the discrepancy
\begin{equation}\label{disc}
\sum_{ m \leq x, m \equiv a(q) } 1_{\mathcal{P}}(m) =\frac{li(x)}{\varphi(q)}  + E(x,a,q), 
 \qquad li(x)=\int_{2}^{x} \frac{\ dt}{\log t}
\end{equation}  
for any $x>2$. Fixing $\theta \in (0,1)$, we will say that EH[$\theta$] holds if (see e.g.\cite[page 16]{Sound2})
\begin{equation}\label{BomVinplus}
\sum_{q \leq N^{\theta} }  \max_{a \in (\Z/q\Z)^{\times}} |E\left(N, a, q \right)| \ll_A \frac{N}{(\log N)^A} 
\end{equation}
for any $A>0$. In this language, the Bombieri-Vinogradov theorem is equivalent to the statement ``EH[$\theta$] holds for any $\theta \in (0,1/2) $''. 

The following lemma gives asymptotic estimates for the weighted sums appearing in \eqref{satz}. They are essentially proven in \cite[Sections 4 and 9]{Poly} but require some additional care in our current setting. We will give a proof of these results in the appendix. 

\begin{lemma}\label{mainasymp}
Assume that EH$[1/2+3\epsilon_0]$ holds with $\epsilon_0 \in (0,1/6)$. Then for any sufficiently large $k \in \N$ and any smooth function $f:\left. \left[0, \infty \right)^{2k} \rightarrow \R \right.$ supported on $\Delta_{2k}(0,1)$, one has the estimates
\begin{equation} \label{primenonprime}
 \sum_{n \leq N}^{'}   w_f(n)^2 \sim  \beta(N) I(f),
\qquad \qquad  \sum_{n \leq N}^{'}  a_{h_l}(n) w_f(n)^2 \sim \delta   \beta(N) J^{(l)}(f)
\end{equation}
for each index $1 \leq l \leq 2k$. The superscript $'$ indicates a restriction to natural numbers $n \equiv b \bmod W$ and we are also assuming that $\mathcal{P}(2k+1)|N$. $I,J$ are integrals given by 
\begin{align}\label{integrals}
 I(f)&= \int  Df(t_1,...t_{2k} )^2 \ dt_1...dt_{2k}, \notag\\
 J^{(l)}(f)&= \int  D_{l}f(t_1,...t_{l-1},0,t_{l+1},...t_{2k} )^2 \ dt_1...\widehat{ dt_{l} }...dt_{2k},
\end{align}  
where $\widehat{ dt_{l} }$ indicates an omission of the variable $t_l$. We have also used the shorthand
$$\beta(N)=\beta(N,W)=N \mathfrak{S} \frac{W^{2k-1} }{\varphi(W)^{2k}} ( \log R)^{-2k}.$$
\end{lemma}

\subsection{An autocorrelation estimate}
One of the key ingredients in the proof of Theorem \ref{GvsP} is a weighted auto-correlation estimate for the indicator $1_{\mathcal{P}}$. The following lemma is a simple modification of the bound obtained in \cite[Lemma 4.6]{Banks}.  Before proceeding we define, for each pair $1 \leq i < j \leq 2k$, the integral
\begin{equation*}
 L^{(i,j)}(f)= \int  D_{l}f(t_1,...t_{i-1},0,t_{i+1},...,t_{j-1},0,t_{j+1},...t_{2k} )^2 \ dt_1....\widehat{ dt_{i} } \widehat{ dt_{j} }...dt_{2k}.
\end{equation*}
\begin{lemma}[Auto-correlation estimate]\label{Mbound}
Assume that EH$[1/2+3\epsilon_0]$ holds with $\epsilon_0 \in (0,1/6)$. With notation as above, there exists a smooth function $f:\left. \left[0, \infty \right)^{2k} \rightarrow \R \right.$ supported on $\Delta_{2k}(0,1 )$  satisfying the estimates \footnote{Recall the notation $r(N,k)=o_k(1)$ when $\lim_{k \rightarrow \infty} r(N,k)=0$, independently of $N$.}
\begin{align}
&\sum_{n \leq N}^{'} a_{h_{i_0} }(n)  w_f(n)^2 \geq \delta \tilde{\psi}(2k)   \beta(N) I(f) (1+o_k(1)), \label{oneprimesum}\\
&\sum_{n \leq N}^{'} a_{h_i}(n) a_{h_j}(n)  w_f(n)^2 \leq \delta \tilde{\psi}(2k)^2    \beta(N) I(f) (1+o_k(1)) \label{twoprimesum}
\end{align}
for all $h_{i_0}$ and all pairs $h_i \neq h_j$ in $\overline{\mathcal{H}}$.  Here $\delta=1/4+\epsilon_0$.

\begin{proof}
We first set $\sigma=  2\psi(2k)$ and invoke \cite[Lemma 4.7]{Banks} to obtain a smooth function $f$ supported on $\Delta_{2k}(0,\sigma)$ such that 
$$J^{(i_0)}(f)\geq \tilde{\psi}(2k) (1+O( (\log k)^{-1/2})) I(f).$$ 
Combining this last estimate with Lemma \ref{mainasymp}, we immediately get \eqref{oneprimesum}.\\  
As for the proof of  \eqref{twoprimesum}, let us proceed as in \cite[Lemma 4.6 iii)]{Banks}. We set $i=1$ and $j=2k$ for ease of notation and aim to replace the indicator $1_{\mathcal{P}}(n+h_1)$ with a suitable sieve weight $( \sum_{d |n+h_1 }   u (\log d/\log R) )^2 $. To this end, let $u(y)$ be a smooth, real-valued function supported on the interval $[0,1-\sigma]$ satisfying the constraint $ u(0)=1$. In particular, we may choose $u$ to be a smooth approximation of the function $1-y/(1-\sigma)=1-y/(1- 2\psi(2k))$ supported on $[0,1-\sigma]$, yielding 
\[\mathcal{U}= \int [u'(y)]^2 \ dy = 1+O(\psi(k)).\]
Next we define $\tilde{f}(y,t_2,...,t_{2k})=u(y)\cdot f(0,t_2,...,t_{2k})$ and observe that $ \tilde{f}$ is supported on $\Delta_{2k}(0,1)$. Moreover, by viewing $\tilde{f} $ as a function of the $2k$ variables $y,t_2,...t_{2k}$, we have that 
$$D_{2k} \tilde{f}(y,t_2,...,t_{2k-1},0)=u'(y)D_{1,2k}(f)(0,t_2,...,t_{2k-1},0).$$

Using the pointwise inequality

\begin{equation*}
 1_{\mathcal{P}}(n+h_1) w_f(n)^2 \leq  \Bigg(  \sum_{ d \leq R, d |n+h_1  } u \left( \frac{\log d}{\log R} \right) \Bigg)^2  \left( \sum_{ \underline{d},  d_1=1} \lambda_{\underline{d}} \right)^2,
\end{equation*}
we may apply Lemma \ref{mainasymp} to find that
\begin{align*}
\sum_{n \leq N}^{'} 1_{\mathcal{P}}(n+h_1) 1_{\mathcal{P}}(|n+h_{2k}|)   w_f(n)^2 &\leq \sum_{n \leq N}^{'} 1_{\mathcal{P}}(|n+h_{2k}|)   w_{\tilde{f}}(n)^2 \sim \delta  \mathcal{U} \beta(N) L^{1,2k}(f). 
\end{align*}
To conclude the proof of \eqref{twoprimesum} we invoke the bound $L^{1,2k}(f)\leq \tilde{\psi}(2k)^2 (1+o_k(1)) I(f)$ from \cite[Lemma 4.7]{Banks}.
It should be noted that the integrals in \cite{Banks} are expressed in terms of the function $F=D f$ (recall \eqref{Ddef}). By the fundamental theorem of calculus, their integrals correspond exactly to our own definitions of $I(f),J(f), L(f)$. 

\end{proof}
 \end{lemma}

\section{Goldbach versus de Polignac numbers}\label{GvsPsection}
In this section we prove Theorem \ref{GvsP} by combining the sieve estimates of Lemmas \ref{mainasymp} and \ref{Mbound} with the K\H{o}v\'ari--S\'os--Tur\'an Theorem from graph theory. We shall assume that the conjecture EH$[1/2+3\epsilon_0]$ holds for some small but fixed $\epsilon_0>0$.\\
Let $k$ be a large natural number and suppose $\mathcal{H}= \left\{h_{1}, ...,h_{k} \right\}$ is an admissible $k$-tuple with each shift $h_j$ satisfying $\mathcal{P}(2k+1) | h_j$. Assume furthermore that $N \equiv 0 \ (\mathcal{P}(2k+1))$ is sufficiently large with respect to $k$. Associated to $\mathcal{H}$ there is another tuple $\mathcal{H}'= \left\{h_{k+1}, ...,h_{2k} \right\}$, where $h_j=h_{j-k}-2N$ for each $j=k+1,...,2k$. Setting $\overline{ \mathcal{H} }:= \mathcal{H} \cup \mathcal{H}'$, we recall the notation
$$a_h(n)= 1_{\mathcal{P}}( |n+h|)=
\left\{
	\begin{array}{ll}
		1_{\mathcal{P}}(n+h)  & \mbox{if } h \in \mathcal{H} \\
		1_{\mathcal{P}}(-h- n) & \mbox{if } h \in \mathcal{H}'
	\end{array}
\right.$$
and take $w=w_f(\overline{ \mathcal{H} })$ to be the weight function formed with the smooth function $f$ from Lemma \ref{Mbound}. It follows that
\begin{equation}\label{Tasymp}
\mathcal{T}(\overline{ \mathcal{H} })=\sum_{n \leq N}^{'} \left(\sum_{h \in \overline{ \mathcal{H} } } a_h(n) \right) w(n)^2
\geq (2k \delta \beta(N) \tilde{\psi}(2k) I_{2k}(f) ) (1+o_k(1)).
\end{equation}
Now write $X(n)=X_{\overline{ \mathcal{H} }}(n)=\sum_{h \in \overline{ \mathcal{H} } } a_h(n)$ and apply Cauchy-Schwarz to get the upper bound
\begin{align}\label{CS}
\mathcal{T}(\overline{ \mathcal{H} }) & \leq \left(\sum_{n \leq N}^{'} 1_{ \left\{ X >0 \right\}   } (n) w(n)^2 \right)^{1/2} \left( \sum_{n \leq N}^{'} \sum_{  h,\tilde{h} \in \overline{ \mathcal{H} }} a_h(n) a_{\tilde{h}}(n) w(n)^2  \right)^{1/2} \notag \\
& \leq \left(\sum_{n \leq N}^{'} w(n)^2 \right)^{1/2} \Big( \sum_{n \leq N}^{'} \sum_{ \substack{ h,\tilde{h} \in \overline{ \mathcal{H} }\\ h \neq \tilde{h}  }} a_h(n) a_{\tilde{h}}(n) w(n)^2 + \mathcal{T}(\overline{ \mathcal{H} }) \Big)^{1/2}.
\end{align}
Combining \eqref{Tasymp}, \eqref{CS} and Lemma \ref{mainasymp}, we get the lower bound 
\begin{align}\label{Tupper}
\sum_{n \leq N}^{'} \sum_{ \substack{ h,\tilde{h} \in \overline{ \mathcal{H} }\\ h \neq \tilde{h}  }} a_h(n) a_{\tilde{h}}(n) w(n)^2 
& \geq  \mathcal{T}(\overline{ \mathcal{H} }) \left( 
\frac{\mathcal{T}(\overline{ \mathcal{H}})  }{ \sum_{n \leq N}^{'}  w(n)^2  } -1 \right) \\
& \geq  \frac{ \left( 2k \delta \beta(N) \tilde{\psi}(2k) I_{2k}(f)  \right)^2 }{\beta(N)  I_{2k}(f)  }   (1+o_k(1)).  \notag
\end{align}
We may now rearrange the double sum on the LHS of the previous inequality. Let $\mathcal{M}:= (\mathcal{H} \times \mathcal{H}) \cup (\mathcal{H}' \times \mathcal{H}')$ and write $\mathcal{M}^c$ for the complement of $\mathcal{M}$ in $\overline{ \mathcal{H} } \times \overline{ \mathcal{H} }$. We gather that
\begin{align}\label{goldbachpairs}
\sum_{n \leq N}^{`} \sum_{ \substack{ (h,\tilde{h}) \in \mathcal{M}^c \\ h \neq \tilde{h}  }} a_h(n) a_{\tilde{h}}(n) w(n)^2&
\geq   4k^2 \delta^2  \tilde{\psi}(2k)^2 \beta(N) I(f)  (1+o_k(1)) \\ \notag
&- \sum_{n \asymp N}^{`} \sum_{ \substack{ (h,\tilde{h}) \in \mathcal{M} \\ h \neq \tilde{h}  }} a_h(n) a_{\tilde{h}}(n) w(n)^2. 
\end{align} 
We now recall that $\delta=1/4+\epsilon_0$. Let us choose a small $\epsilon_1>0$ such that $(1+ 2 \epsilon_1)<4\delta$ and consider two mutually exclusive assumptions. \\
\\
{\bf Hypothesis A} We say hypothesis A holds if there exists an increasing sequence of natural numbers $k$ satisfying the following condition. For each admissible $k$-tuple $\mathcal{H}$ at least $1/2 - \epsilon_1$ of all pairs $1 \leq i <j \leq k$ produce a difference $h_j-h_i$ which is not a de Polignac number.\\
First suppose that hypothesis A is true and let $n \in [N/2,N]$. It follows that $a_h(n) a_{\tilde{h}}(n)=0$ for at least $1/2 -\epsilon_1$ of all pairs $(h,\tilde{h}) \in \mathcal{M} $. Plugging this information back into (\ref{goldbachpairs}) and applying Lemma \ref{Mbound}, we find that
\begin{align*}
\sum_{n \leq N} \sum_{ \substack{ (h,\tilde{h}) \in \mathcal{M}^c \\ h \neq \tilde{h}  }} a_h(n) a_{\tilde{h}}(n) w(n)^2&
\geq  4k^2 \delta^2  \tilde{\psi}(2k)^2 \beta(N) I(f)   (1+o_k(1)) \\
&-  (1/2 + \epsilon_1)\delta  \tilde{\psi}(2k)^2 \beta(N) I(f) \frac{2k(2k-1)}{2}   (1+o_k(1)).
\end{align*}    
We see that the RHS is a positive quantity for $N$ and $k$ sufficiently large. From this we deduce the existence of some $n\leq N$ and a pair $h_i,h_j \in \mathcal{H}$ for which $n+h_i$ and $2N-n-h_j$ are both prime. This implies that all sufficiently large $2N$ lie within a bounded distance from a Goldbach number. \\
\\
Now consider the case where hypothesis A fails and write $\mathcal{D}$ for the set of de Polignac numbers. Let $k$ be any sufficiently large number and $\mathcal{H}$ an admissible $k$-tuple. Then at least $1/2 + \epsilon_1$ of all pairs $1 \leq i <j \leq k$ give a difference $h_j-h_i$ which is a de Polignac number. As a straightforward consequence we get the following useful property. Let $U:=\left\{ u_1,...,u_k\right\}\subset 2\N $ and $V:=\left\{ v_1,...,v_k\right\}\subset 2\N $ be a pair of sets for which $U \cap V = \emptyset$ and $U \cup V$ is admissible. Then there exists a $(u,v) \in U \times V$ with $|u-v| \in \mathcal{D}$. We will say $(\mathcal{D},k)$ satisfies the \textsl{cross product property}. To finish the proof of Theorem \ref{GvsP} we need the following lemma which was proven in a private communication with S.Miner and S. Das. 
\begin{lemma}\label{CPP}
Let $k \in \N$ be arbitrary and suppose $(\mathcal{D},k)$ satisfies the cross product property. Then $\mathcal{D}$ has full asymptotic density in $2 \N$. Moreover, we have the power saving
\begin{align}\label{Ddensity}
\left| \mathcal{D}^c \cap 2[N]\right| \ll N^{\kappa}
\end{align}
for some $\kappa <1$ depending on $k$.
\end{lemma}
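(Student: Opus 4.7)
The strategy is to reformulate the cross product property as forbidding a complete bipartite subgraph $K_{k,k}$ in a suitable graph of forbidden differences and then apply the K\H{o}v\'ari--S\'os--Tur\'an theorem. Set $W := \prod_{p \leq 2k} p$ and, for each even residue $a$ with $0 \leq a < W$, consider the bipartite graph $H_a$ with vertex classes $X := W\Z \cap [N]$ and $Y_a := (a + W\Z) \cap [N]$ (both subsets of $2\N$ because $W$ and $a$ are even), joining $x \in X$ to $y \in Y_a$ whenever $y - x \in \mathcal{D}^c$. Any set $U \cup V$ with $U \subset X$, $V \subset Y_a$, and $|U \cup V| \leq 2k$ is automatically admissible: primes $p \mid W$ pose no obstruction because $(U \cup V) \bmod p \subseteq \{0, a \bmod p\}$ contains at most two residues out of $p$, while primes $p > 2k \geq |U \cup V|$ cannot be covered for cardinality reasons alone. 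When $a \not\equiv 0 \pmod{W}$ the classes $X$ and $Y_a$ are disjoint, so a $K_{k,k}$ subgraph of $H_a$ would produce disjoint $k$-subsets with admissible union whose cross differences all lie in $\mathcal{D}^c$, contradicting the cross product property; the case $a = 0$ is treated identically using the ordinary graph on $X$ with edges $\{x,y\}$ whenever $|x-y| \in \mathcal{D}^c$.

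Applying the K\H{o}v\'ari--S\'os--Tur\'an bound for $K_{k,k}$-free bipartite graphs with parts of size $\sim N/W$ yields $e(H_a) \ll_k (N/W)^{2 - 1/k}$. Conversely, each integer $m \in \mathcal{D}^c \cap (a + W\Z) \cap [1,N/2]$ contributes at least $N/(2W) + O(1)$ edges to $H_a$, one per pair $(x, x+m)$ with $x \in X$ and $x + m \leq N$, so
\begin{equation*}
\bigl| \mathcal{D}^c \cap (a + W\Z) \cap [N/2] \bigr| \ll_k (N/W)^{1 - 1/k}.
\end{equation*}
Summing over the $W/2$ even residues $a \bmod W$ (odd residues contribute nothing since $\mathcal{D}^c \subseteq 2\N$) and rescaling $N$ by a factor of two one obtains $|\mathcal{D}^c \cap [N]| \ll_k W \cdot (N/W)^{1 - 1/k} = W^{1/k} N^{1 - 1/k}$; since $W$ depends only on $k$, this delivers the claimed power saving for any $\kappa \in (1 - 1/k,\, 1)$ and in particular establishes the full asymptotic density of $\mathcal{D}$ in $2\N$.

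The chief obstacle is the admissibility constraint built into the cross product property: a naive K\H{o}v\'ari--S\'os--Tur\'an argument applied to the full graph on $2\N \cap [N]$ would extract a $K_{k,k}$ whose vertex set might fail to be admissible, leaving no contradiction to exploit. Restricting each side of the bipartite graph to a single residue class modulo $W$ is precisely what trivialises admissibility at primes $p \leq 2k$, and the price paid is only the constant-in-$N$ factor $W^{1/k} = O_k(1)$ absorbed into the final bound.
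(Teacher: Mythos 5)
Your proof is correct, and it takes a genuinely different route from the paper's. The paper builds a single graph on all of $2[N]$ with edges marking pairs whose difference lies in $\mathcal{D}^c$, and then needs a separate combinatorial lemma (Lemma \ref{lem:surfing}: from any two disjoint $\ell$-sets with $\ell = \ell(k) = 3^{\pi(2k)}k$ one can extract admissible $k$-subsets by successively deleting a residue class modulo each prime $p\le 2k$) to conclude the graph is $K_{\ell,\ell}$-free; it then counts edges against differences via the observation that covering $t$ distinct differences costs at least $\binom{t+1}{2}$ edges, yielding the exponent $1-1/(2\ell)$. You instead pre-sieve: by fixing a single residue class modulo $W=\prod_{p\le 2k}p$ on each side of a bipartite graph, admissibility of any $\le 2k$-element subset of $X\cup Y_a$ becomes automatic (at most two residues mod $p$ for $p\mid W$, and cardinality alone handles $p>2k$), so the cross product property directly forbids $K_{k,k}$ — no auxiliary lemma needed. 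You then count edges the other way, noting each small difference produces $\gg N/W$ edges. This is cleaner, eliminates Lemma \ref{lem:surfing} entirely, and gives a substantially better exponent $\kappa = 1-1/k$ rather than $1-1/(2\ell)$ with $\ell$ exponentially large in $k$. Two small presentational points: you should write $|y-x|\in\mathcal{D}^c$ rather than $y-x\in\mathcal{D}^c$ when defining edges (the cross product property is stated in terms of $|u-v|$), and the count of $x\in X$ with $x+m\le N$ should be $N/(2W) - O(1)$ rather than $+O(1)$ — neither affects the argument.
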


\begin{remark} There is an expedient way of establishing the full density of $\mathcal{D}$ without the power saving result. Indeed, suppose for  contradiction that the set $A:=\mathcal{D}^{c} \cap 2 \N$ has positive upper density and let $\mathcal{P}(y)=\prod_{p \leq y} p$. An application of Szemer\'edi's Theorem \cite{Sze} gives a $(2k-2)$-term arithmetic progression $P=(b+ra)_{r \leq 2k-2} \subset A$. We may assume without loss of generality that $a \equiv 0 \bmod \mathcal{P}(2k)$. Now consider the pair $U=\left\{ a,2a...,ka\right\}$ and $V=\left\{ b+ka,b+(k+1)a...,b+(2k-1)a\right\}$. Clearly $U$ and $V$ do not intersect and their union is admissible. Since the difference set $\left|U-V\right|=P \subset A$, the cross product property gives the desired contradiction. \end{remark}
 
We now turn to the estimate for $\mathcal{D}^c \cap [N]$. The result will follow from two simple lemmas. Call a pair $\{x,y\}$ an $A$-pair if $\left|y-x\right| \in A$. 
\begin{lemma} \label{lem:surfing}
For every $k$ there is an $\ell = \ell(k)$ such that if $U$ and $T$ are two disjoint subsets of $2[N]$ of size $\ell$, then there are $X \subset U$ and $Y \subset T$ such that $\left|X\right| = \left|Y\right| = k$ and $X \cup Y$ is admissible.
\end{lemma}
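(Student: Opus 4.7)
My plan is a one-step pigeonhole with respect to a single modulus, namely the primorial $P = \prod_{p \leq 2k} p$. I will take $\ell(k) := kP$. Given disjoint $U, T \subset 2[N]$ with $|U| = |T| = \ell$, partitioning $U$ and $T$ into their residue classes modulo $P$ shows, by an averaging argument, that there exist residues $a,b \pmod{P}$ with
\[
\bigl|U \cap (a + P\Z)\bigr| \;\geq\; \ell/P \;=\; k, \qquad \bigl|T \cap (b + P\Z)\bigr| \;\geq\; k.
\]
I then choose $X$ and $Y$ to be any $k$-element subsets of these ``monochromatic'' slices. Since $X \subset U$, $Y \subset T$, and $U \cap T = \emptyset$, the union $X \cup Y$ is automatically a set of $2k$ distinct integers.

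It remains to verify that $X \cup Y$ is admissible, which I do by checking each prime $q$ separately. For $q > 2k$, admissibility is automatic because $X \cup Y$ has only $2k < q$ elements and therefore cannot occupy all $q$ residue classes. For $q \leq 2k$, the modulus $q$ divides $P$, so every element of $X$ is congruent to $a \pmod{q}$ and every element of $Y$ is congruent to $b \pmod{q}$; hence $X \cup Y$ occupies at most two residue classes mod $q$. When $q$ is an odd prime this is strictly fewer than $q$, and when $q = 2$ the hypothesis $U, T \subset 2\N$ forces $a \equiv b \equiv 0 \pmod{2}$, so only a single class is occupied. In every case $X \cup Y$ misses some residue class mod $q$, which is the definition of admissibility.

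There is no genuine obstacle to carrying this out; the construction is essentially a reduction to the classical observation that within any fixed residue class modulo a primorial one automatically obtains admissible tuples of any length. The only ``price'' is that $P = \prod_{p \leq 2k} p$ grows like $e^{(2+o(1))k}$ by the prime number theorem, so the resulting $\ell(k) = kP$ is exponential in $k$. This is harmless since the lemma only asks for the existence of some $\ell(k)$, and will in turn govern the value of the exponent $\kappa$ in the power-saving estimate \eqref{Ddensity}.
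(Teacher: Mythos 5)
Your proof is correct, and it takes a genuinely different route from the paper's. The paper's proof sieves iteratively, one odd prime at a time: at stage $i$ it finds a residue class mod $p_{i+1}$ occupied by at most a $2/p_{i+1}$ fraction of the surviving elements, deletes that class, and continues, ending with $\ell(k) = 3^{m} k$ where $m = \pi(2k)$; since each factor $1 - 2/p_j \geq 1/3$, the surviving sets still have $\geq k$ elements. Your version collapses this to a single pigeonhole modulo the full primorial $P = \prod_{p\leq 2k} p$, forcing $X$ to lie in one arithmetic progression mod $P$ and $Y$ in another, after which admissibility is immediate because mod any prime $q \mid P$ the union occupies at most two classes (and only one when $q=2$, thanks to $U,T \subset 2\N$). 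The trade-off is the size of $\ell$: you need $\ell = kP = k\,e^{(2+o(1))k}$, whereas the paper's iterative argument gets away with $\ell = 3^{\pi(2k)}k = e^{O(k/\log k)}\,k$, which is markedly smaller. Since $\kappa$ in the power-saving estimate is $1 - 1/(2\ell)$, the paper's construction yields a quantitatively better exponent, but for the purposes of Lemma \ref{CPP} any finite $\ell(k)$ suffices, so your simpler argument is entirely adequate. Your one-shot version is also slightly more robust conceptually: it produces a pair of full arithmetic progressions rather than just sets dodging one class per prime, which makes the admissibility check a one-liner.
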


Given the above lemma, we shall use the classic result of K\H{o}v\'ari--S\'os--Tur\'an~\cite{kst} on the Tur\'an number of complete bipartite graphs to resolve the problem.

\begin{theorem}[K\H{o}v\'ari--S\'os--Tur\'an, 1954] \label{thm:kst}
If $G$ is a graph on $n$ vertices that does not contain $K_{t,t}$ as a subgraph, then $G$ has at most $c t^{1/t} n^{2 - 1/t} + O(n)$ edges.
\end{theorem}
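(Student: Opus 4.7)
The plan is to execute the classical double-counting argument of Kővári, Sós and Turán. Let $G$ be a $K_{t,t}$-free graph on the vertex set $V$ with $|V|=n$ and $e$ edges, and consider the set of incidences between vertices and $t$-subsets of their neighborhoods,
$$\mathcal{C}=\bigl\{(v,S):v\in V,\ S\subset N(v),\ |S|=t\bigr\}.$$

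I would count $|\mathcal{C}|$ in two ways. Summing over $v$ first, each vertex of degree $d(v)$ contributes $\binom{d(v)}{t}$, giving $|\mathcal{C}|=\sum_{v\in V}\binom{d(v)}{t}$. Summing over $S$ first, let $c(S)$ be the number of common neighbors of $S$, so that $|\mathcal{C}|=\sum_{S}c(S)$. If some $t$-subset $S$ had $c(S)\ge t$, then $S$ together with any $t$ of its common neighbors would form a copy of $K_{t,t}$ in $G$, contradicting the hypothesis. Hence $c(S)\le t-1$ for every $t$-subset $S$ of $V$, and therefore $|\mathcal{C}|\le(t-1)\binom{n}{t}$.

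Combining the two expressions for $|\mathcal{C}|$ and applying Jensen's inequality to the convex function $x\mapsto\binom{x}{t}$ at the average degree $\bar d=2e/n$ yields
$$n\binom{\bar d}{t}\le(t-1)\binom{n}{t}\le\frac{(t-1)\,n^{t}}{t!}.$$
Using the elementary bound $\binom{\bar d}{t}\ge(\bar d-t+1)^{t}/t!$, one obtains $(\bar d-t+1)^{t}\le(t-1)\,n^{t-1}$, whence $\bar d\le(t-1)^{1/t}n^{1-1/t}+(t-1)$. Multiplying by $n/2$ and absorbing the $(t-1)n/2$ correction into the error term delivers
$$e\le\tfrac12(t-1)^{1/t}n^{2-1/t}+O(n)\le\tfrac12\,t^{1/t}\,n^{2-1/t}+O(n),$$
which is the asserted inequality with $c=1/2$ and the implied constant in $O(n)$ depending on $t$.

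The only mildly delicate step is the application of Jensen, since $x\mapsto\binom{x}{t}$ viewed as a real polynomial is not convex on all of $[0,\infty)$ and vanishes on the integers $0,1,\dots,t-1$. The standard fix is to discard the vertices of degree strictly less than $t$ at the outset: their total contribution to $e$ is at most $(t-1)n/2=O(n)$, and on the surviving subgraph every $d(v)\ge t$, so the polynomial $\binom{x}{t}$ is convex on the relevant range and Jensen applies legitimately. This bookkeeping is the only real obstacle in an otherwise routine argument.
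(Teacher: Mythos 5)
The paper does not actually prove this theorem; it cites \cite{kst} and invokes it as a black box inside the proof of Lemma~\ref{CPP}, so there is no in-paper argument to compare against. Your proof is the classical K\H{o}v\'ari--S\'os--Tur\'an double-counting argument, and it is essentially correct: count vertex--$t$-set incidences $(v,S)$ in two ways, observe that any $t$-set with $\ge t$ common neighbours would furnish a $K_{t,t}$ (legitimate here since a vertex cannot lie in its own neighbourhood, so the two sides of the bipartite graph are automatically disjoint), and conclude $\sum_v \binom{d(v)}{t} \le (t-1)\binom{n}{t}$ before applying Jensen. One small imprecision in the final paragraph: discarding the vertices of degree $<t$ in a single pass does not guarantee minimum degree $\ge t$ in the surviving subgraph, since each deletion can drop other degrees below $t$; you would have to iterate, or else give a separate accounting of the edges lost. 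The cleaner and entirely standard repair is to replace $x\mapsto\binom{x}{t}$ by its convex extension that vanishes on $[0,t-1]$ (this agrees with $\binom{m}{t}$ at every nonnegative integer $m$, so the left side of the inequality is unchanged), and to note separately that if $\bar d \le t-1$ then $e \le (t-1)n/2 = O(n)$ and the claimed bound is trivial, while if $\bar d \ge t-1$ the extension equals $\binom{\bar d}{t}$ and your chain of inequalities proceeds as written. With that adjustment the argument rigorously yields $e \le \tfrac12 t^{1/t} n^{2-1/t} + O(n)$, matching the statement with $c = 1/2$.
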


\begin{proof}[ Proof of Lemma \ref{CPP}]
Let $H$ be a graph with vertices $V = 2[N]$, and edges
$$ E = \left\{ \{x,y\} : \{x, y\} \textrm{ is \emph{not} a $\mathcal{D}$-pair} \right\}. $$
Let $(\mathcal{D},k)$ satisfy the cross product property and let $\ell = \ell(k)$ be as in Lemma \ref{lem:surfing}.  We claim that $H$ is $K_{\ell,\ell}$-free.  Indeed, suppose for contradiction $K_{\ell,\ell} \subset H$, and let $U$ and $T$ be the two vertex sets on which this copy of $K_{\ell,\ell}$ is realised.  In particular, we must have $U \times T \subset E(H)$, and so there are no $\mathcal{D}$-pairs in $U \times T$.

However, by Lemma \ref{lem:surfing}, we can find two $k$-sets $X \subset U$ and $Y \subset T$ such that $X \cup Y$ is admissible. By assumption there must be some $\mathcal{D}$-pair in $X \times Y \subset U \times T$, giving the necessary contradiction.

Thus $H$ is indeed $K_{\ell,\ell}$-free, and by Theorem~\ref{thm:kst} has $O(N^{2 - 1/\ell})$ edges. Observe that there are $N-d$ edges corresponding to a difference of $2d$ and moreover, we need at least $\binom{t+1}{2}$ edges to cover $t$ distinct differences.  Since $H$ has only $O(N^{2 - 1/\ell})$ edges, it can cover at most $O(N^{1 - 1/(2\ell)})$ distinct differences, and thus we must have that $| \mathcal{D}^c \cap 2[N]| =O(N^{1 - 1/(2 \ell)})$.
\end{proof}

It remains to prove Lemma \ref{lem:surfing}.

\begin{proof}[Proof of Lemma \ref{lem:surfing}.]
Since we wish to find sets $X$ and $Y$ of size $k$, we need only consider the primes  $ p_1 < \hdots < p_m \le 2k$, where $m = \pi(2k)$. Recall that we are given two sets $U, T \subset 2[N]$. To begin, set $X_1 = U$ and $Y_1 = T$ and $\ell_0 = \ell = 3^{m} k$.

Now suppose that for $1 \le i \le m-1$ we are given subsets $X_i \subset U$ and $Y_i \subset T$, both of size $\ell_i$, such that $X_i \cup Y_i$ does not occupy all residue classes modulo $p_j$ for any $1 \le j \le i$.  By the pigeonhole principle, there is some residue class $C$ modulo $p_{i+1}$ such that $\left|(X_i \cup Y_i) \cap C \right| \le 2 \ell_i / p_{i+1}$.  Let $X_{i+1}' = X_i \setminus C$ and $Y_{i+1}' = Y_i \setminus C$.  Let $\ell_{i+1} = \ell_i (1 - 2/p_{i+1})$, and observe that this gives a lower bound on the sizes of $X_{i+1}'$ and $Y_{i+1}'$.  Finally, take $X_{i+1}$ and $Y_{i+1}$ to be arbitrary subsets of $X_{i+1}'$ and $Y_{i+1}'$ of size $\ell_{i+1}$, and note that these sets do not occupy the residue class $C$ modulo $p_{i+1}$.

Repeating these process, we arrive at sets $X_m \subset U$ and $Y_m \subset T$ of size $\ell_m$ that do not occupy all residue classes modulo $p_j$ for any $1 \le j \le m$, and hence are admissible.  All that remains is to verify the lower bound $\ell_m \ge k$, after which we may take $X$ and $Y$ to be arbitrary $k$-subsets of $X_m$ and $Y_m$.
By construction, we have $\ell_m = \ell_{m-1} \left( 1 - 2/p_m \right) = \hdots = \ell \prod_{j=2}^{m} \left( 1 - 2/p_j \right) \ge \ell / 3^m = k$, completing the proof. 
\end{proof}

\section{A note on the sequence of normalised prime gaps}\label{normalisedsection}

In this section we give a proof of Proposition \ref{Ldensity}. The argument relies on a conjectural form of \cite[Theorem 4.2]{Banks} together with a density estimate for sum-free sets and a simple Cauchy Schwarz estimate. We first require some notation and background results.
\subsection{Preliminaries}
We begin by recalling \cite[Lemma 4.1]{Banks}.

\begin{lemma}\label{exceptional}
There exists an absolute constant $c>0$ with the following property. Given any parameter $T\geq 3$ and $P= T^{1/ \log_2 T}$, there is at most one modulus $q$ such that $q \leq T$ and $P^{+}(q)\leq P$ and at most one primitive character
$\chi \bmod q$ for which $L(s,\chi)$ has a zero in the region
$$Re(s)\geq 1-\frac{c}{\log P}, \qquad \left|Im(s)\right| \leq \exp[\log P/(\log T)^{1/2}]. $$
In this case, one has the bounds
$$P^{+}(q)\gg \log q \gg \log_2 T.$$
\end{lemma}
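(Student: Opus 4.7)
The plan is to treat Lemma \ref{exceptional} as a smooth-moduli variant of the Landau--Page theorem, combining a log-free zero density estimate for a single modulus with a Deuring--Heilbronn style repulsion argument to promote the pointwise count to an ``at most one'' statement across all admissible characters. The two pieces feed into each other: the density estimate tells us that zeros in the rectangle are scarce, and the repulsion argument tells us that two of them cannot coexist.

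First, for a fixed primitive character $\chi \bmod q$ with $q \leq T$ and $P^{+}(q) \leq P$, I would count zeros of $L(s,\chi)$ in the given rectangle by a detector of Tur\'an--Hal\'asz type. Associate to each putative zero $\rho = \beta + i\gamma$ an almost-zero of a mollifier $M(s,\chi) = \sum_{n \leq X} a_n \chi(n) n^{-s}$ whose coefficients approximate $1/L(s,\chi)$; then apply the Hal\'asz--Montgomery large-sieve inequality to the resulting system of linear forms. Choosing the length $X$ of the Dirichlet polynomial compatibly with the smoothness $P^{+}(q) \leq P$ lets one replace $\log q$ by $\log P$ in the bound, which is precisely where the hypothesis $P \geq T^{1/\log_2 T}$ enters and where the estimate becomes log-free.

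Next, I would pass from a single modulus to the uniform ``at most one'' statement by contradiction. Suppose two distinct primitive characters $\chi_1 \bmod q_1$ and $\chi_2 \bmod q_2$ each possess a zero $\rho_j = \beta_j + i\gamma_j$ in the region. The product $\chi_1 \bar{\chi}_2$ has conductor dividing $[q_1,q_2] \leq T^2$ and shares the same $P$-smooth radical. Form the auxiliary function
$$F(s) = \zeta(s)\, L(s,\chi_1) L(s,\bar{\chi}_1) L(s,\chi_2) L(s,\bar{\chi}_2) L(s,\chi_1\bar{\chi}_2) L(s,\bar{\chi}_1\chi_2),$$
whose logarithm has nonnegative Dirichlet coefficients by the classical $|1 + \chi_1(n) n^{-i\gamma_1} + \chi_2(n) n^{-i\gamma_2}|^2 \geq 0$ identity. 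Weighing the simple pole of $\zeta$ at $s=1$ against the six potential zeros of the $L$-factors by a truncated Perron/explicit-formula argument produces a repulsion inequality that forces at least one of $1-\beta_j$ to exceed $c/\log P$, a contradiction. The ancillary bound $P^{+}(q) \gg \log q \gg \log_2 T$ for the unique exceptional $q$, if it exists, drops out of the boundary case of the same inequality.

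The main obstacle is the log-free character of the density estimate at a height as large as $H = \exp[\log P/(\log T)^{1/2}]$: a naive application of Hal\'asz--Montgomery loses a factor of $\log(qH)$ that would swamp the saving from the smoothness of $q$. Calibrating the mollifier length, the Dirichlet polynomial coefficients, and the diagonalisation so that the $P$-smoothness cancels this logarithm exactly, while still covering the full height $H$, is the delicate step and is the reason a lower bound on $P$ of the form $P \geq T^{1/\log_2 T}$ is unavoidable in the hypothesis.
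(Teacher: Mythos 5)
The paper does not prove Lemma \ref{exceptional}: it is quoted verbatim (``Recall \cite[Lemma 4.1]{Banks}'') and used as a black box, and the proof in \cite{Banks} in turn invokes results on zero-free regions for Dirichlet $L$-functions with $P$-smooth conductor (due to Chang). There is therefore no in-paper proof to compare your proposal against.

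On its own merits, your sketch identifies the right circle of ideas -- a zero-free region governed by $\log P$ rather than $\log q$, together with a Deuring--Heilbronn repulsion argument to isolate at most one exceptional character -- but as written it has two concrete gaps. First, the auxiliary product $F(s) = \zeta(s)L(s,\chi_1)L(s,\bar\chi_1)L(s,\chi_2)L(s,\bar\chi_2)L(s,\chi_1\bar\chi_2)L(s,\bar\chi_1\chi_2)$ is not what the positivity identity $|1 + \chi_1(n)n^{-i\gamma_1} + \chi_2(n)n^{-i\gamma_2}|^2 \geq 0$ produces: the diagonal contribution is $\zeta(s)^3$ (up to Euler factors at primes dividing $q_1 q_2$), not $\zeta(s)$, and each off-diagonal $L$-factor must carry a vertical shift by $\pm i\gamma_1$, $\pm i\gamma_2$, or $\pm i(\gamma_1 - \gamma_2)$. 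Without those shifts the putative zeros $\rho_j = \beta_j + i\gamma_j$ lie on horizontal lines different from the pole of $\zeta$ at $s=1$, never enter the explicit formula for $-F'/F$ near the real axis, and the repulsion inequality you want does not materialise. Second, the assertion that $P^{+}(q) \gg \log q \gg \log_2 T$ ``drops out of the boundary case of the same inequality'' is not an argument: in \cite{Banks} these bounds come from a separate step showing that if the exceptional modulus $q$ were too small or too smooth, the stronger zero-free region available for such conductors would already exclude the zero. To turn your sketch into a proof you would need to supply the Chang-type smooth-conductor input explicitly and correct the auxiliary function; or, as the present paper does, simply cite the lemma and move on.
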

Following the notation of Lemma \ref{exceptional} we introduce the quantities
$$Z(T)=P^{+}(q), \qquad  \ w= \epsilon \log N, \qquad  W=\prod_{\substack{p \leq w \\ p \nmid Z(N^{4 \epsilon})}} p,$$
and set $Z(T)=1$ if the exceptional modulus $q\leq T$ of the lemma does not exist.\\ 
{\bf A modified Elliott-Halberstam conjecture $EH^{*}[\theta]$.} Given $\theta \in (0,1)$ we will say that $EH^{*}[\theta]$ holds if there is an $\epsilon'=\epsilon'(\theta)$ with the following property: for any $\epsilon \in (0,\epsilon'), c \in (0, \theta) $ and any squarefree $q_0$ satisfying $P^{+}(q_0) \leq N^{\epsilon/\log_2 N}$, one has the estimate
\begin{align}\label{bomvin}
\sum_{\substack{ q \leq N^{\theta - c}\\ \substack{ q_0|q \\(q,Z(N^{2\epsilon}) )=1} }} \max_{(a,q)=1} \left|\psi(N;q,a) -\frac{N}{\varphi(q)}\right| \ll_{A,c} \frac{N}{\varphi(q_0) (\log N)^A}.
\end{align}

In \cite[Theorem 4.2]{Banks} it was demonstrated  that (\ref{bomvin}) holds with $\theta=1/2 $.\\
{\bf Key Assumption.} For the remainder of Section \ref{normalisedsection} we will assume that $EH^{*}[\theta$] holds for some $\theta \in (2/3,1)$.\\

Throughout this entire section we will take $k \in \N$ growing slowly to infinity with $N$ and let $\mathcal{H}=\left\{ h_1,...,h_k\right\}$ be an admissible $k$-tuple for which each member is bounded in size by $N$. Assume also that each prime dividing $\prod_{1 \leq i <j \leq k} (h_i-h_j)$ is smaller than $w$.\\
In order to prove Proposition \ref{Ldensity} we require a modified version of the weight (\ref{weight}) in which the 
$$\lambda_{\underline{d}}=\left( \prod_{i=1}^k \mu(d_i) \right)  f \left( \frac{\log d_1}{\log N},...,\frac{\log d_k}{\log N} \right)$$   
are supported on $k$-tuples for which $((\prod_{i=1}^k d_i), Z(N^{4 \epsilon}))=1$. We let $\nu$ denote the associated weight function given in (\ref{weight}) and we choose a residue class $b \bmod W$ such that $(b+h_j,W)=1$ for each $j \leq k$.  As in section \ref{sievesection}, we let $\psi: \N \rightarrow (0,\infty)$ tend to zero in such a way that $\psi(k) \log k \rightarrow \infty$ and write $\tilde{\psi}(k)=\psi(k) \log k/k$. By choosing the functions $f$ appropriately, the following bounds were proven in \cite[Lemma 4.6  (i)-(iii) and Lemma 4.7]{Banks}:
\begin{align*}
\text{a)} & \ \sum_{n \leq N}^{`} \nu(n)^2 \sim \rho(N) I_k(F) \\ 
\text{b)} & \ \sum_{n \leq N}^{`} 1_{\mathcal{P}}(n+h) \nu(n)^2  \geq  \rho(N) \tilde{\psi}(k) I_k(F) (1+o(1))\\ 
\text{c)} & \ \sum_{n \leq N}^{`} 1_{\mathcal{P}}(n+h)1_{\mathcal{P}}(n+h') \nu(n)^2   \leq \frac{2}{\theta}  \rho(N) \tilde{\psi}(k)^2 I_k(F) (1+o(1)).
\end{align*}
As before, the superscript $'$ indicates a restriction to natural numbers $n \equiv b(W)$. We have also written $F=Df=\frac{\partial^{k} }{\partial t_1 ...\partial t_{k}} f$ and 
$$\rho(N)=\rho(N,W)=N \frac{W^{k-1} }{\varphi(W)^{k}} ( \log N)^{-k}.$$

\begin{remark}
The appearance of the factor $2/\theta$ in equation c) just above comes from a straightforward modification of the arguments in \cite{Banks}. Indeed, thanks to the assumption $EH^{*}[\theta]$, we may take the function $G$, in the proof of \cite[Lemma 4.6 iii)]{Banks}, to be supported on the interval $[0, \theta/2- \psi(k)]$ rather than the interval $[0,1/4- \psi(k)]$. Moreover, the very same assumption  $EH^{*}[\theta]$ allows one to control the error terms in b) and c), precisely as in the proof of \cite[Lemma 4.6 ii)]{Banks}.    
\end{remark}

\begin{lemma}\label{cells}
Let $k \in \N$ be a large natural multiple of $3$. Suppose, in addition to all of the above assumptions, that the $k$-tuple $\mathcal{H}$ is partitioned into equally sized cells
$$\mathcal{H}=\mathcal{H}_{1} \cup \mathcal{H}_2 \cup \mathcal{H}_{3}$$
so that $|\mathcal{H}_{i} |=k/3 $ for each $i$. Then there exists an $n_1 \in [N,2N]$ for which $n_1 \equiv b \bmod W$ together with a pair of distinct indices $ i_{1}, i_{2} \in \left\{1,2,3\right\}$  satisfying
$$|\mathcal{H}_{i}(n) \cap \mathcal{P}|\geq 1 \text{ for } i= i_{1},i_{2}. $$
\begin{proof}
Write $Y_j(n)=\sum_{h \in \mathcal{H}_j} 1_{\mathcal{P}}(n+h)$ and introduce the expressions 
$$\mathcal{U}_j(\mathcal{H})= \sum_{n \leq N}^{'} Y_j(n) \nu(n)^2, \qquad 1 \leq j \leq 3.$$
Observe that the lemma will follow if we are able to show that the weighted sum 
\begin{align*}
A=&\sum_{n \leq N}^{'} \left(\sum_{j=1}^{3} 1_{\left\{Y_j \geq 1\right\}}(n) -1 \right) \nu(n)^2
\end{align*} 
is strictly positive. Let us first record a rough upper bound for $\mathcal{U}_j(\mathcal{H})$. Combining the estimates a)-c) with the bound in the first line of \eqref{Tupper} (replacing $\mathcal{T}$ with $\mathcal{U}_j$), we readily find that 
\begin{align*}
\mathcal{U}_j(\mathcal{H}) &\ll 
\Big( \sum_{n \leq N}^{'} \sum_{ \substack{h,h' \in \mathcal{H}_j\\ h\neq h'} } 1_{\mathcal{P}}(n+h) 1_{\mathcal{P}}(n+h') \nu(n)^2 \Big)^{1/2}
\left(\sum_{n \leq N}^{'} \nu(n)^2 \right)^{1/2} \\
& \ll \rho(N)   \tilde{\psi}(k) k I_k(F).
\end{align*}
On the other hand, invoking the estimate c) once again, we also have that 

\begin{align*}
\sum_{n \leq N}^{'} &Y_{j}^2(n)\nu(n)^2= \sum_{n \leq N}^{'} \sum_{ \substack{h,h' \in \mathcal{H}_j\\ h\neq h'} } 1_{\mathcal{P}}(n+h) 1_{\mathcal{P}}(n+h') \nu(n)^2
+ \sum_{n \leq N}^{'}  \sum_{h \in \mathcal{H}_j } 1_{\mathcal{P}}( n+h) \nu(n)^2\\
&\leq  \rho(N) I_k(F) (1+o(1))  \left(\frac{2}{\theta}   \tilde{\psi}(k)^2  (k/3)^2 + O \left( \tilde{\psi}(k)k \right) \right) \\
&= \rho(N) I_k(F) (1+o(1))\left(\frac{2}{\theta}   \frac{  \tilde{\psi}(k)^2  k^2}{9}  \right).
\end{align*}
It now follows that

\begin{align*}
\sum_{n \leq N}^{`} 1_{ \left\{ Y_j \geq 1 \right\}   } (n) \nu(n)^2& \geq \Bigg( \sum_{n \leq N}^{`}  \sum_{h \in \mathcal{H}_j } 1_{\mathcal{P}}( n+h) \nu(n)^2 \Bigg)^2 \Bigg(\sum_{n \leq N}^{`} Y_{j}^2(n)\nu(n)^2 \Bigg)^{-1} \\
& \geq  \Big[   \rho(N) \frac{\tilde{\psi}(k) k}{3} \ I_k(F) \Big]^2   \Big[   \rho(N) I_k(F)  \left(\frac{2}{\theta}   \frac{  \tilde{\psi}(k)^2  k^2}{9}  \right)   \Big]^{-1} (1+o(1)) \\
& \geq \rho(N) I_k(F) \frac{\theta}{2}(1+o(1)).
\end{align*}

Plugging this last estimate into $A$, we get
\begin{align*}
A& \geq   \rho(N) I_k(F) (1+o(1)) \left(  \frac{3\theta}{2}  -1  \right)>0,
\end{align*}
which concludes the proof.
\end{proof}
\end{lemma}

\subsection{Sum-free sets in $\R^{+}$}

We now turn to the proof of Proposition \ref{sumfree}. Recall that our goal is to establish the density estimate
\[ \overline{d}(A) \leq 1/3\]
for any measurable sum-free set $A \subset \R^{+}$. 
 Let $m$ denote the Lebesgue measure on $\R$ and write $A_{+}:=A \cap \R^{+}$ for any $A \subset \R$. Recall that a set $A \subset \R^{+}$ is said to be sum-free if the equation $x+y=z$ has no solutions in $A^3$; the asymptotic upper density of $A$ is given by
$$\overline{d}(A):= \limsup_{T \rightarrow \infty } \frac{m(A \cap [0,T])}{T}.$$
Given two measurable sets $A, B$ denote the relative density $d(A;B):= m(A \cap B)/m(B)$ when $m(B)$ is finite and non-zero. We write $A^c$ for the complement of $A$ in $\R^{+}$.\\
Before proceeding with the proof of the density estimate, we recall two well-known theorems, due to Steinhaus and Lebesgue respectively. First, when $m(A)>0$ there exists an open interval $I\ni 0$ contained in $A-A$. Second, the Lebesgue density Theorem asserts that  
$$\lim_{x \rightarrow 0} m(A \cap [y-x,y+x])/2x=1 \text{ for almost all } y \in A.$$

{\bf Proof of Proposition \ref{sumfree}}.
Cleary we may assume that $E$ is unbounded and $m(E)>0$. As mentioned just above, there exists an interval of the form 
$$I=(0,s) \subset (E-E)_{+}.$$
We may thus attach to each $y \in I$ the non-empty set $\mathcal{S}(y):=\left\{ x \in E | \ x+y \in E \right\} $. To prove the result we make the following claim. \\

{\bf Claim.} Given any natural number $M \geq 1$, there exists an evenly spaced $M$-tuple $(y_i)_{i=1}^{M}=(iy)_{i=1}^{M} \subset I$ for which 
$$(E-E)_{+} \supsetneq \left[E- \cup_{i=1}^M\mathcal{S}(y_i) \right]_{+}.$$
Suppose the claim is true and fix any $M \geq 1$. We obtain a pair $e_1< e_2$ in $E$ so that $e_2-e_1 \notin \left[E- \cup_{i=1}^M\mathcal{S}(y_i) \right]_{+}$. Recalling that $(E-E)_{+} \subset E^{c}$, one finds the sets
$$E, E+e_1, \cup_{i=1}^M\mathcal{S}(y_i) +e_2 $$
 to be pairwise disjoint. As a result one has that
$$3 \cdot \overline{d} \left( \cup_{i=1}^M\mathcal{S}(y_i) \right) \leq 2 \cdot \overline{d} (E) +  \overline{d} \left( \cup_{i=1}^M\mathcal{S}(y_i) \right)  
= \overline{d} \left(  E \cup \left[ E+e_1 \right] \cup \left[ \cup_{i=1}^M\mathcal{S}(y_i) +e_2\right]  \right) \leq 1. $$

This yields the estimate $\overline{d} \left( \cup_{i=1}^M\mathcal{S}(y_i) \right) \leq 1/3$. Our aim is to estimate $\overline{d}(E)$ via the decomposition $E=[\cup_{i=1}^M\mathcal{S}(y_i)] \cup [\left( \cup_{i=1}^M\mathcal{S}(y_i) \right)^c \cap E]$. Since, by construction, the sets
$$[\left( \cup_{i=1}^M\mathcal{S}(y_i) \right)^c \cap E]+y_j \quad j=1, \dots, M$$
are pairwise disjoint, we gather that $\overline{d} \left( \left( \cup_{i=1}^M\mathcal{S}(y_i) \right)^c \cap E \right) \leq 1/M$. Seeing that $M$ may be chosen arbitrarily large, we get $\overline{d}(E) \leq 1/3$, as desired. It remains to prove the claim.\\

{\bf Proof of claim.} Let us assume for contradiction that the claim is false. Then we obtain a value $M \in \N$ such that 
\begin{align}\label{contradiction}
(E-E)_{+}=\left[E- \cup_{i=1}^M\mathcal{S}(y_i) \right]_{+}
\end{align}	
for all evenly spaced $M$-tuples $(y_i)_{i=1}^{M}=(iy)_{i=1}^{M} \subset I$. From this statement we will derive two consequences.
\begin{itemize}
\item[(i)]  The difference set $(E-E)_{+}$ is dense in $\R^{+}$.
\item[(ii)] For any open interval $J \subset \R^{+}$ satisfying $m(J)\leq s$, one has that  $d(E^c; J) \gg 1/M^2 $.
\end{itemize}

To prove (i), first note that $(E-E)_{+}$ is unbounded (since $E$ is). Therefore it is enough to show that $(E-E)_{+}$ is dense in $(0,z)$ for any $z \in (E-E)_{+}$. In fact it will suffice to show that  $(E-E)_{+}$ is dense in $(z-s,z)$ since we may then find a $z' \in [(E-E) \cap (z-s, z-s/2)]_{+}$ and repeat the argument on the interval $(z'-s, z')$. Continuing in this manner one arrives at the origin in finitely many steps.\\
Now fix $z \in (E-E)_{+}$ and let $t \in (z-s,z)$ be arbitrary. In order to construct a sequence $(z_n)_n \subset (z-s,z)\cap (E-E)_+$ which converges to $t$, we proceed as follows. Define the evenly spaced sequence $(iy_1)_{i=1}^{M}$ with $y_1=(z-t)/M$. An application of (\ref{contradiction}) guarantees some index $i_{1}$ for which $z_1:=z-i_1 y_1 \in E-E$ and we get that $|z_1 -t|\leq  (1-1/M) |z -t|$. This process may be reiterated on the interval $(t,z_1)$ to obtain an element $z_2 \in E-E$ satisfying $|z_2 -t| \leq  (1-1/M) |z_1 -t|\leq  (1-1/M)^2 |z -t|$. By induction we get the desired sequence $z_n \rightarrow t$.\\
To prove (ii), consider any interval $J=(a,b) \subset \R^{+}$ of smaller length than $I$. Invoking property (i), we may assume for notational convenience that $b \in E-E $. Now write $J_M:= \left(0, (b-a)/M \right) $ and introduce the collection
$$\mathcal{A}_j:=\left\{x \in J_M| \ b- jx \in E-E \right\} \quad  j=1,\dots,M.$$
We will show that the $\mathcal{A}_j$'s cover the set $J_M$. Indeed, let $x \in J_M$ be arbitrary and consider the string $(x_i)_{i=1}^M=(ix)_{i=1}^M$. Applying (\ref{contradiction}), we find an index $j \leq M$ such that $b=e-e'$ with $e \in E$ and $e' \in \mathcal{S}(x_j)$ and hence we gather that $b-jx=e-(e'+jx) \in E-E$. It follows that $J_M=\cup_{j=1}^{M} \mathcal{A}_{j} $ and this in turn implies the lower bound $d(\mathcal{A}_i; J_M)\gg 1/M $ for some index $i$. Consequently one finds that 
$$ d(E^c; J) \geq d(E-E; J) \geq d(b- i \cdot \mathcal{A}_i ; J)\gg 1/M^2, $$
yielding property ii). To conclude the proof of the claim we observe that (ii) holds uniformly over all small intervals $J$. By the Lebesgue density Theorem this is only possible if $m(E)=0$  which gives the desired contradiction.\hfill$\square$

\begin{remark}
The Proof of Proposition \ref{sumfree} was inspired by the work of \L uczak \cite{Luc}, which deals with sum-free sets in discrete settings.
\end{remark}

\subsection{Concluding the proof of Proposition \ref{Ldensity}}
From this point onwards the proof of Proposition \ref{Ldensity} is carried out as in \cite[Section 6]{Banks}. We give a brief summary of the argument for the reader's convenience. Suppose $k$ is a large positive integer multiple of $3$. Let $\beta_{3}\geq ...\geq \beta_1>0$ be an arbitrary triple, fix any small $\epsilon \in (0,1)$ and set 
$$x = \epsilon^{-1}, \qquad y=w=\epsilon \log N, \qquad z=y (\log_2 y)(\log_3 y)^{-1}.$$
The discussion in \cite[Section 6]{Banks} (leading up to eq. (6.6)) yields the existence of a $k$-tuple $\mathcal{H}=\mathcal{H}_{1} \cup \mathcal{H}_2 \cup \mathcal{H}_{3}$ for which each set in the partition is of size $k/3$ and 
$$h_j=(\beta_j + \epsilon +o(1))\log N \text{ for all} \ h_j\in \mathcal{H}_j.$$
Moreover, there exists a suitable residue class $b \bmod W$ and an integer $n>y$ such that $n \equiv b \bmod W$ and $[n,n+z] \cap \mathcal{P} = \mathcal{H}(n) \cap \mathcal{P}$. We gather that the primes in $\mathcal{H}(n)$ are consecutive. By Lemma \ref{cells} there are at least two indices $1 \leq i<j \leq 3$, coming from distinct cells in the partition of $\mathcal{H}$, yielding primes with difference
$$\frac{p_{r+1}-p_{r} }{\log p_r} = \beta_{j} - \beta_{i} +o(1).$$
Since this construction holds for infinitely many values of $N$, we easily deduce the following property. 
\begin{align}\label{betacondition}
&\text{For any string } 0\leq \beta_1<\beta_2< \beta_{3} \text{ there exists a pair }  i <j
 \text{ satisfying } \beta_{j}-\beta_{i} \in \mathcal{L}.
\end{align}

Property (\ref{betacondition}) just above implies that the set $\mathcal{L}^c$ is sum-free. Indeed, suppose that we are given an arbitrary pair $0 \leq \alpha_1 < \alpha_2$ belonging to $\mathcal{L}^c$. Let $0 \leq \beta_1 <\beta_2 < \beta_3$ be chosen to satisfy $\beta_2- \beta_1= \alpha_1$ and $\beta_3-\beta_2=\alpha_2$. By \eqref{betacondition} we have that $\alpha_1 + \alpha_2=\beta_3-\beta_1 \in \mathcal{L}$, as desired. We may thus apply Proposition \ref{sumfree} to get the lower bound \eqref{lowerdensityL}.

 \renewcommand{\theequation}{A-\arabic{equation}}
 \renewcommand{\thetheorem}{A.\arabic{theorem}}
  \setcounter{equation}{0}  
  \setcounter{theorem}{0} 
  \section*{APPENDIX}  
\subsection*{Evaluating the weighted sum} In this final section we will discuss Lemma \ref{mainasymp}. The proof follows that of \cite[Lemma 4.1]{Poly} very closely, although there are some important technical differences, owing to the possible presence of large primes in $P_{\ell_1,\ell_2}$ (see \eqref{Pl1l2}). Recall that $R=N^{\delta}$ with $\delta=1/4+\epsilon_0$ for some small $\epsilon_0>0$. Given any tuple $\underline{t} \in  \R^{2k}$ we will use the shorthand $f_R(\underline{t})=f(\log t_1/\log R,..,\log t_{2k}/\log R)$. Let us consider the first sum in \eqref{primenonprime}. Expanding $\sum_{n \leq N}^{'} w_f(n)^2 $, we get 
\begin{align}\label{satz1}
&\sum_{\underline{d}, \underline{e}}^{\star} \left(\prod_{i=1}^{2k} \mu(d_i)  \mu(e_i)\right) 
f_R \left(\underline{d} \right) f_R\left( \underline{e}\right) 
\sum_{ \substack{n \leq N\\ n \equiv -h_i ([d_i,e_i]) \\ n\equiv b(W)   }} 1 \notag \\ 
&= \sum_{\underline{d}, \underline{e}}^{\dagger} \left(\prod_{i=1}^{2k} \mu(d_i)  \mu(e_i)\right) 
f_R \left(\underline{d} \right) f_R\left( \underline{e}\right) 
\left(\frac{N}{q(\underline{d}, \underline{e})} +O(1)\right).
\end{align}
Here we used the notation $q(\underline{d}, \underline{e}):=W \cdot lcm( [d_1,e_1],...[d_{2k},e_{2k}])$. The sum $\sum^{\dagger}$ ranges over vectors $(\underline{d},\underline{e})$  which are composed of squarefree integers satisfying the following conditions:

\begin{align}\label{i}
\begin{split}
& \text{ i) $\gcd([d_j,e_j],W)=1$ for each $j \leq 2k$. } \\
& \text{ ii) any prime $p>w$ dividing both $[d_{\ell_1},e_{\ell_1}]$ and $[d_{\ell_1},e_{\ell_1}]$ must lie in $P_{\ell_1,\ell_2}$. }
\end{split}
\end{align}

Since $f$ is a compactly supported, smooth function, we may apply Fourier inversion to write
\begin{align}\label{inversion}
\exp \left( \sum_{j=1}^{2k} t_j \right) f(\underline{t})=\int_{\R^{2k}} \exp \left(-i \sum_{j=1}^{2k} t_j \xi_j \right) g(\underline{\xi}) \ d\underline{\xi} \qquad  \qquad  \underline{t}=(t_1,...,t_{2k}) \in  \R^{2k}.
\end{align}
for some smooth $g:\R^{2k} \rightarrow \R$ obeying decay estimates of the form $g(\underline{\xi}) \ll_A (1+\left\| \underline{\xi} \right\|)^{-A}$ for any $A>0$. This leads to the expression
$$f_R \left(\underline{d} \right)=f \left( \frac{\log d_1}{\log R},...,\frac{\log d_{2k}}{\log R} \right)=\int_{\R^{2k}} \frac{ g(\underline{\xi} )}{\prod_{j=1}^{2k} d_j^{\frac{1+i  \xi_j}{\log R} }} \ d\underline{\xi} .$$
Thanks to this integral representation, we find that the main term of (\ref{satz1}) can be recast as 
\begin{equation}\label{mainterm1fourier}
\frac{N}{W} \int_{\R^{2k}} \int_{\R^{2k}} \widetilde{K} \left(\underline{\xi}, \underline{\xi'} \right) g\left( \underline{\xi} \right) 
g\left(\underline{\xi'} \right) \ d \underline{\xi} \ d \underline{\xi'},
\end{equation}
where
$$\widetilde{K} \left(\underline{\xi}, \underline{\xi'} \right)=\sum_{ \underline{d},\underline{e}}^{\dagger} \frac{1}{lcm( [d_1,e_1],...[d_{2k},e_{2k}])}
\prod_{j=1}^{2k} \frac{\mu(d_j)\mu(e_j)}{ d_j^{\frac{1+i  \xi_j }{\log R}} e_j^{\frac{1+i  \xi_j' }{\log R} }}.$$

The sum $\widetilde{K}$ may be expressed as an Euler product
\begin{equation}\label{localK}
\widetilde{K} \left(\underline{\xi}, \underline{\xi'} \right)=\prod_{p\nmid W } K_p \left(\underline{\xi}, \underline{\xi'} \right), \qquad 
K_p=1+\frac{T_p}{p},
\end{equation}
where
\begin{equation}\label{Tpdef}
T_p= \sum_{ \substack{  \underline{d},\underline{e}    \\  [d_1,...,d_{2k},e_1,...,e_{2k}]=p }}^{\dagger} 
\prod_{j=1}^{2k} \frac{\mu(d_j)\mu(e_j)}{d_j^{\frac{1+i  \xi_j }{\log R}} e_j^{\frac{1+i  \xi_j' }{\log R} }}.
\end{equation}

\begin{lemma}\label{Kzeta} With notation as above we have the asymptotic formula
\begin{equation}\label{Kproducteval}
\widetilde{K} \left(\underline{\xi}, \underline{\xi'} \right)=(1+o(1)) \cdot \mathfrak{S} \cdot
\prod_{j=1}^{2k} \frac{\zeta_W(1+\frac{2+ \xi_j +i\xi'_j}{\log R} )}{  \zeta_W(1+\frac{1+i\xi_j}{\log R})  \zeta_W(1+\frac{1+i\xi_j}{\log R})    }
\end{equation}
provided that $|\xi_i|,|\xi_j'|\leq \sqrt{\log N}$ for each $i,j \leq 2k$. Here we have used the restricted zeta function $\zeta_W(s)=\prod_{p \nmid W}(1-p^{-s})^{-1}$. The singular series $\mathfrak{S}$ was defined in \eqref{singular}.
\begin{proof}
Let us first record the straightforward estimate $\prod_{p>w   } (1+O_k(p^{-2}) )=1+o(1)$, for later use. The local factors $T_p$ may be treated in one of two ways, depending on whether $p \in P_{\ell_1,\ell_2}$ or $p \in P_0$.\\
{\bf Case I: $p \in P_0$.} We see that $p$ must divide exactly one of the $[d_j,e_j]$ on the RHS of \eqref{Tpdef} and hence
\begin{equation}\label{Tp0}
T_p= T_p^{(0)}=
\sum_{j=1}^{2k} \left( -p^{-\frac{1+i  \xi_j }{\log R} } - p^{-\frac{1+i  \xi_j' }{\log R} }+p^{-\frac{2+i  \xi_j + i \xi_j' }{\log R} } \right).
\end{equation} 
In this case we find that
\begin{equation}\label{Kp0}
K_p= (1+O_k(p^{-2}) )\prod_{j \leq 2k} 
\frac{ \Big(1-p^{-1- \frac{1+i  \xi_j }{\log R} } \Big)  \Big(1-p^{-1- \frac{1+i  \xi_j' }{\log R} } \Big)  }{ \Big(1-p^{-1- \frac{2+i  \xi_j +i \xi_j'}{\log R} } \Big) } 
\end{equation}

{\bf Case II: $ p \in P_{\ell_1,\ell_2}$.} Here we must account for two types of configurations. Either $p$ divides exactly one of the $[d_j,e_j]$ or else $[d_{\ell_1},e_{\ell_1}]=p=[d_{\ell_1},e_{\ell_1}]$ and $p$ divides no other $[d_i,e_i]$. Accordingly, we have that
$$ T_p=T_p^{(0)}+  T_p^{(\ell_1,\ell_2)}$$
with $T_p^{(0)}$ as in \eqref{Tp0} and 
\begin{align}\label{Tpl1l2}
T_p^{(\ell_1,\ell_2)}&= p^{\frac{ 4+i (\xi_{\ell_1}+  \xi_{\ell_1}'+ \xi_{\ell_2}+ \xi_{\ell_2}' ) }{\log R}} 
+  p^{-\frac{2+i  \xi_{\ell_1} + i \xi_{\ell_2} }{\log R} }+p^{-\frac{2+i  \xi_{\ell_1} + i \xi_{\ell_2}' }{\log R} }
+ p^{-\frac{2+i  \xi_{\ell_1}' + i \xi_{\ell_2} }{\log R} }+p^{-\frac{2+i  \xi_{\ell_1}' + i \xi_{\ell_2}' }{\log R} } \notag\\
- & p^{-\frac{3+i  \xi_{\ell_1} + i \xi_{\ell_1}'+ i \xi_{\ell_2} }{\log R} }- p^{-\frac{3+i  \xi_{\ell_1} + i \xi_{\ell_1}'+ i \xi_{\ell_2}' }{\log R} }
-  p^{-\frac{3+i  \xi_{\ell_1} + i \xi_{\ell_2}+ i \xi_{\ell_2}' }{\log R} }- p^{-\frac{3+i  \xi_{\ell_1}' + i \xi_{\ell_2}+ i \xi_{\ell_2}' }{\log R} }.
\end{align}  
Since 
$$ 1-p^{-1-\frac{1+i a }{\log R} }=1-1/p+O\left( \frac{\log p}{p \sqrt{\log R}} \right)=(1-1/p) \left( 1+O\left(\frac{\log p}{p \sqrt{\log R}} \right) \right)$$
uniformly over all $|a|\leq \sqrt{\log R}$ and all $p \leq N^2$, we conclude that
\begin{equation}\label{Kpnot0}
K_p= \frac{ (1+O_k(p^{-2}) )}{(1-1/p)}  \Big( 1+O\Big(\frac{\log p}{p \sqrt{\log R}} \Big) \Big) \prod_{j \leq 2k} 
\frac{ \Big(1-p^{-1- \frac{1+i  \xi_j }{\log R} } \Big) 
\Big(1-p^{-1- \frac{1+i  \xi_j' }{\log R} } \Big)  }{ \Big(1-p^{-1- \frac{2+i  \xi_j +i \xi_j'}{\log R} } \Big) } 
\end{equation}
in case II.
Finally, assuming that $k$ grows sufficiently slowly with $N$, we may apply Mertens' Theorem to find that
\begin{align*}
\prod_{\substack{(\ell_1,\ell_2) \\ 1 \leq \ell_1 < \ell_2 \leq 2k}} \  
\prod_{p \in P_{\ell_1, \ell_2 }}  \Big( 1+O\Big(\frac{\log p}{p \sqrt{\log R}} \Big) \Big) 
&= \exp \Big(O \Big( \sum_{\substack{(\ell_1,\ell_2) \\ 1 \leq \ell_1 < \ell_2 \leq 2k}} \  
\sum_{p \in P_{\ell_1, \ell_2 }} \frac{\log p}{p \sqrt{\log R}} \Big) \Big)  \\
&=  \exp \Big( O \Big( \sum_{p \leq 4k^2 \log N}  \frac{\log p}{p \sqrt{\log R}} \Big) \Big) =1+o(1),
\end{align*}
since each $P_{\ell_1, \ell_2 }$ contains at most $O(\log N)$ primes. Inserting the product representations \eqref{Kp0} and \eqref{Kpnot0} into \eqref{localK}, we get \eqref{Kproducteval}.
\end{proof}
\end{lemma}

Using \cite[Equation 41]{Poly} to evaluate the product of zeta factors on the RHS of \eqref{Kproducteval}, we get that 
\begin{equation}\label{Keval}
\widetilde{K}(\xi,\xi')=(1+o(1)) B^{-2k}  \prod_{j=1}^{2k} \frac{(1+i\xi_j)(1+i\xi'_j)}{2+ \xi_j +i\xi'_j}, \ \ \qquad B=\frac{\varphi(W)}{W} \log R
\end{equation}
provided that $|\xi_i|,|\xi_j'|\leq \sqrt{\log N}$ for each $i,j \leq 2k$. Observe that the complementary range of $(\underline{\xi},\underline{\xi'})$ makes a negligible contribution to the integral in \eqref{mainterm1fourier}, due to the rapid decay of $g$.\\    
To prove the identity
\begin{equation}\label{intdiffrep}
\int_{\R^{2k}} \int_{\R^{2k}}  \prod_{j=1}^{2k} \frac{(1+i\xi_j)(1+i\xi'_j)}{2+ i\xi_j +i\xi'_j} g(\xi) g(\xi') \ d\xi d\xi'=\int_{\R_{+}^{2k}} Df(t)^2 \ dt=I(f),
\end{equation}
divide the RHS of (\ref{inversion}) by $\exp(\sum_{i=1}^{2k} t_i)$ and differentiate the integrand with respect to each variable $t_i$. This gives 
$$Df(t)= \int_{\R^{2k}} \prod_{j=1}^{2k} (1+i\xi_j) \exp \left(- \sum_{r=1}^{2k} t_r (1+i\xi_r) \right) g(\underline{\xi}) \ d \underline{\xi}$$
which is then squared and integrated to get the desired representation \eqref{intdiffrep}. Combining \eqref{mainterm1fourier}, \eqref{Keval} and \eqref{intdiffrep}, we find the asymptotic 
\begin{equation}
\mathfrak{S} NW^{-1} B^{-2k} I(f)
\end{equation} 
for the main term in \eqref{satz1}. The error term in \eqref{satz1} is easily estimated by a $2k$-fold divisor bound, yielding  a contribution no greater than $O_{\varepsilon,k} ( \|f \|_{\infty}^{2} R^{2+\varepsilon})$ for any $\varepsilon>0$, which is clearly acceptable.

\subsection*{The prime sum with weight $w_f^2$}
We now turn to the evaluation of the prime sum appearing in \eqref{primenonprime}. It will be convenient to carry out the computations for the shift $h_1$. 
Let us recall the notation \eqref{disc}. Given any primitive residue class $a \bmod q$, write 
$$\sum_{ m \leq x, m \equiv a(q) } 1_{\mathcal{P}}(m) =\frac{li(x)}{\varphi(q)} + E(x,a,q).$$ 
Expanding the sum $\sum_{n \leq N}^{'} 1_{\mathcal{P} }(n+h_1) w(n)^2$, we find the expression  

\begin{align}\label{satz2}
 &\sum_{\underline{d}, \underline{e}}^{\star} \left(\prod_{j=1}^{2k} \mu(d_i)  \mu(e_i)\right) f_R \left(\underline{d} \right) f_R\left( \underline{e}\right) 
\sum_{ \substack{n \leq N\\ n \equiv -h_i ([d_i,e_i])  \\ n\equiv b(W)   }}  1_{\mathcal{P} }(n+h_1) \notag \\ 
=&\sum_{\underline{d}, \underline{e}}^{\star} \left(\prod_{j=2}^{2k} \mu(d_j)  \mu(e_j)\right) 
f \left(0, \frac{\log d_2}{\log R},...,\frac{\log d_{2k} }{\log R} \right) f \left(0, \frac{\log e_2}{\log R},...,\frac{\log e_{2k} }{\log R} \right)  \notag\\ 
& \times \left[ \frac{li(N+h_1) - li(h_1)}{\varphi(q(\underline{d}, \underline{e}))} + E\left(N+h_1, a(\underline{d}, \underline{e}), q(\underline{d}, \underline{e}) \right) +O(h_1)\right].
\end{align}
Note that, in the second and third line, the vectors $\underline{d}, \underline{e}$ are made up of $2k-1$ variables. Here $a(\underline{d}, \underline{e}) $ denotes the unique residue class $\bmod \ q(\underline{d}, \underline{e})$ satisfying all the congruence conditions imposed on $n$ in the first line. In addition to the congruence conditions i) and ii) at the beginning of this appendix, we must also impose the following restricton on the tuples $\underline{d}, \underline{e}$:
\begin{equation}\label{iii}
\text{ iii) Any prime $p>w$ which divides $[d_{\ell},e_{\ell}]$ may not lie in $P_{1,\ell}$. }
\end{equation}
The main term in (\ref{satz2}) is treated in almost exactly the same way as \eqref{satz1} and hence we will only give a sketch of the argument. Replacing $f$ with the function $f^{*}(t_2,...,t_{2k})=f(0,t_2,...,t_{2k})$ we define a corresponding  function $g^{*}$ as before and find the expression 
\begin{equation}\label{mainterm2fourier}
\frac{li(N+h_1) - li(h_1)}{\varphi(W)} \int_{\R^{2k-1}} \int_{\R^{2k-1}} \widetilde{L} \left(\underline{\xi}, \underline{\xi'} \right) g^{*}\left( \underline{\xi} \right) 
g^{*}\left(\underline{\xi'} \right) \ d \underline{\xi} \ d \underline{\xi'}.
\end{equation}

Here
\begin{equation*}
\widetilde{L} \left(\underline{\xi}, \underline{\xi'} \right)=\prod_{p\nmid W } L_p \left(\underline{\xi}, \underline{\xi'} \right), \qquad 
L_p=1+\frac{S_p}{p-1},
\end{equation*}
where
\begin{equation*}
S_p= \sum_{ \substack{  \underline{d},\underline{e}    \\  [d_2,...,d_{2k},e_2,...,e_{2k}]=p }}^{\dagger} 
\prod_{j=2}^{2k} \frac{\mu(d_j)\mu(e_j)}{d_j^{\frac{1+i  \xi_j }{\log R}} e_j^{\frac{1+i  \xi_j' }{\log R} }}.
\end{equation*}
The sum on the RHS is restricted to vectors $\underline{d},\underline{e}$ satisfying the conditions i)-iii). The local factors $S_p$ are analyzed in much the same way as the $T_p$ in Lemma \ref{Kzeta}. However, for $p \in P_{\ell_1, \ell_2 }$ it is important to distinguish between pairs $(1,\ell_2)$ and $(\ell_1,\ell_2)$ where $\ell_1,\ell_2 \neq 1$. Write
$$S_p^{(0)}=
\sum_{j=2}^{2k} \left( -p^{-\frac{1+i  \xi_j }{\log R} } - p^{-\frac{1+i  \xi_j' }{\log R} }+p^{-\frac{2+i  \xi_j + i \xi_j' }{\log R} } \right)$$    
and for any $\ell \neq 1$, set
$$S_p^{(0)}(\ell)=
\sum_{j=2, j \neq \ell}^{2k} \left( -p^{-\frac{1+i  \xi_j }{\log R} } - p^{-\frac{1+i  \xi_j' }{\log R} }+p^{-\frac{2+i  \xi_j + i \xi_j' }{\log R} } \right).$$
Taking in to account all three conditions i)-iii) one finds that 

\[\arraycolsep=1.4pt\def\arraystretch{1.5}
S_p =
\left\{
	\begin{array}{lll}
		S_p^{(0)} & \mbox{  if }  p \in P_0 \\
		S_p^{(0)}(\ell) & \mbox{  if }   p \in P_{1,\ell} \\
		S_p^{(0)}+ T_p^{(\ell_1,\ell_2)} & \mbox{  if }   p \in P_{\ell_1,\ell_2} \text{ with } \ell_1,\ell_2 \neq 1 
	\end{array}
\right.
\]
with $T_p^{(\ell_1,\ell_2)}$ as in \eqref{Tpl1l2}. It follows that 
\begin{equation*}
\widetilde{L} \left(\underline{\xi}, \underline{\xi'} \right)=(1+o(1)) \cdot \mathfrak{S} \cdot
\prod_{j=2}^{2k} \frac{\zeta_W(1+\frac{2+ \xi_j +i\xi'_j}{\log R} )}{  \zeta_W(1+\frac{1+i\xi_j}{\log R})  \zeta_W(1+\frac{1+i\xi_j}{\log R})    }
\end{equation*}
as long as $|\xi_i|,|\xi_j'|\leq \sqrt{\log N}$ for each $2 \leq i,j \leq 2k$. From this point onwards, the argument from the previous section carries through without change and we obtain a main term asymptotic to
$$\mathfrak{S} \ \frac{li(N+h_1) - li(h_1)}{\varphi(W)} B^{1-2k} J^{(1)}(f) \sim \delta \mathfrak{S}\frac{N}{W} B^{-2k} J_{2k}^{(1)}(f)= \delta \beta(N)J^{(1)}(f), $$
with $J$ as in \eqref{integrals}. For the remainder term $E$, we recall that the assumption EH$[1/2+2\epsilon_0]$ gives the bound \eqref{BomVinplus}:
\begin{equation*}
\sum_{q \leq N^{1/2 + 2\epsilon_0} }  \max_{a \in (\Z/q\Z)^{\times})} |E \left(N, a, q \right)| \ll_A \frac{N}{(\log N)^A} 
\end{equation*}
for any $A>0$ and all large $N$. Using this estimate in combination with the trivial uniform bound $\max_a |E(N,a,q)| \leq N/q +1$, we find that for any $A>0$, the error term in \eqref{satz2} is of size at most
\begin{align*}
\sum_{  \underline{d}, \underline{e} }  | \lambda_{\underline{d}}|  
|\lambda_{\underline{e}}|& \max_{a} |E(2N, a,  q(\underline{d}, \underline{e}) )| \\ 
&\ll   \| f\|^2_{\infty} \Big(\sum_{r \leq N^{1/2 + 5\epsilon_0/ 2} } \tau_{4k}(r)^2 (N/r+1)  \Big)^{1/2} 
\Big(\sum_{r \leq N^{1/2 + 5\epsilon_0/2}} \max_a E(N+h_1,a,r) \Big)^{1/2} \\
& \ll_{A,k} N(\log N)^{-A}.
\end{align*}
In the last line we used a standard divisor sum bound $\sum_{r \leq x} \tau_{s}(r)^2/r \ll_s (\log x)^{s^2} $. This concludes the proof of Lemma \ref{mainasymp}.
\\ 
{\bf Acknowledgements.} For the most part, the results in this paper were obtained in my doctoral thesis. I would like to thank Terence Tao for suggesting this problem and for many useful discussions throughout the project. I also extend my gratitude to Shagnik Das and Sam Miner for sharing their combinatorial insights.

\end{document}